\def\B'c{{\mathcal{B'}}}
\def\U'c{{\mathcal{U'}}}
\def\opn#1#2{\def#1{\operatorname{#2}}} 
\opn\chara{char} \opn\length{\ell}
\opn\projdim{proj\,dim} \opn\injdim{inj\,dim} \opn\ini{in}
\opn\rank{rank} \opn\depth{depth} \opn\sdepth{sdepth}
\opn\height{ht} \opn\embdim{emb\,dim} \opn\codim{codim}
\opn\Tr{Tr} \opn\bigrank{big\,rank}
\opn\superheight{superheight}\opn\lcm{lcm}
\opn\trdeg{tr\,deg}%
\opn\reg{reg} \opn\lreg{lreg} \opn\set{set} \opn\supp{Supp}
\opn\shad{Shad}
\opn\div{div} \opn\Div{Div} \opn\cl{cl} \opn\Cl{Cl}
\opn\Spec{Spec} \opn\Supp{Supp} \opn\supp{supp} \opn\Sing{Sing}
\opn\Ass{Ass} \opn\Min{Min} \opn\size{size} \opn\bigsize{bigsize}
\opn\lex{lex}
\opn\Ann{Ann} \opn\Rad{Rad} \opn\Soc{Soc}
\opn\Ker{Ker} \opn\Coker{Coker} \opn\Im{Im} \opn\Hom{Hom}
\opn\Tor{Tor} \opn\Ext{Ext} \opn\End{End} \opn\Aut{Aut} \opn\id{id}
\opn\nat{nat} \opn\GL{GL} \opn\SL{SL} \opn\mod{mod} \opn\ord{ord}
\opn\aff{aff} \opn\con{conv} \opn\relint{relint} \opn\st{st}
\opn\lk{lk} \opn\cn{cn} \opn\core{core} \opn\vol{vol}
\opn\gr{gr}
\def\pot#1#2{#1[\kern-0.28ex[#2]\kern-0.28ex]}
\opn\dirlim{\underrightarrow{\lim}}
\opn\invlim{\underleftarrow{\lim}}
\def\pnt{{\raise0.5mm\hbox{\large\bf.}}}
\def\Implies{\ifmmode\Longrightarrow \else
     \unskip${}\Longrightarrow{}$\ignorespaces\fi}
\def\implies{\ifmmode\Rightarrow \else
     \unskip${}\Rightarrow{}$\ignorespaces\fi}
\def\iff{\ifmmode\Longleftrightarrow \else
     \unskip${}\Longleftrightarrow{}$\ignorespaces\fi}
\newtheorem{Theorem}{Theorem}[section]
\newtheorem{Lemma}[Theorem]{Lemma}
\newtheorem{Corollary}[Theorem]{Corollary}
\newtheorem{Proposition}[Theorem]{Proposition}
\newtheorem{Remark}[Theorem]{Remark}
\newtheorem{Example}[Theorem]{Example}
\newtheorem{Definition}[Theorem]{Definition}
\let\epsilon=\varepsilon
\let\phi=\varphi
\let\kappa=\varkappa
\numberwithin{equation}{section}
\title{Depth and Stanley depth of powers of the edge ideals of some caterpillar and lobster trees}
\author[Tooba Zahid] {Tooba Zahid}
\address{Tooba Zahid, School of Natural Sciences, National University of Sciences and Technology Islamabad, Sector H-12, Islamabad Pakistan.}
\email{tooba14325@gmail.com}
\author[Zunaira Sajid] {Zunaira Sajid}
\address{Zunaira Sajid, School of Natural Sciences, National University of Sciences and Technology Islamabad, Sector H-12, Islamabad Pakistan.}
\email{zunaira$\_\,$sajjid@yahoo.com}
\author[Muhammad Ishaq]{Muhammad Ishaq}
\address{Muhammad Ishaq, School of Natural Sciences, National University of Sciences and Technology Islamabad, Sector H-12, Islamabad Pakistan.}
\email{ishaq$\_\,$maths@yahoo.com}
\begin{document}
	\maketitle
	\begin{abstract}
		Let $S$ be a ring of polynomials in finitely many variables over a field. In this paper we give lower bounds for depth and Stanley depth of modules of the type $S/I^t$ for $t\geq1$, where $I$ is the edge ideal of some caterpillar and lobster trees. These new bounds are much sharper than the existing bounds for the classes of ideals we considered.  \\\\
  \textbf{Keywords:} Depth, Stanley depth, monomial ideal, edge ideal, tree.\\
\textbf{2020 Mathematics Subject Classification:} Primary: 13C15, 05E40; Secondary: 13F20, 13F55.
	\end{abstract}
	\section*{Introduction}
Let $K$ be a field and $S=K[x_1,\dots,x_m]$ be the polynomial ring in $m$ variables over $K$. Let $N$ be a finitely generated $\mathbb{Z}^m$-graded $S$-module. Let $uK[Z]$ be the $K$-subspace generated by all elements of the form $uy$ where $u$ is a homogeneous element in $N$, $y$ is a monomial in $K[Z]$ and $Z\subseteq\{x_1,x_2,\dots,x_m\}$. If $uK[Z]$ is a free $K[Z]$-module then it is called a Stanley space of dimension $|Z|$. A decomposition $\mathcal{D}$ of the $K$-vector space $N$ as a finite direct sum of Stanley spaces is called a Stanley decomposition of $N$. Let $$\mathcal{D} \, : \, N = \bigoplus_{j=1}^r u_j K[Z_j].$$ The Stanley depth of $\mathcal{D}$ is $\sdepth(\mathcal{D})=\min\{|Z_j|\}$. The number
$$\sdepth(N):=\max \{\sdepth(\mathcal{D})\,|\,\mathcal{D}\text{\,is a\,Stanley\,decomposition\,of}\,N\},$$ is called the Stanley depth of $N$. If $\mathfrak{m}:=(x_1,x_2,\dots,x_m)$ then the depth of $N$ is defined to be the common length of all maximal $N$-sequences in $\mathfrak{m}$. In \cite{ST} Stanley conjectured that $\sdepth (N)\geq \depth (N)$. This conjecture was later disproved by Duval et al. \cite{D} in $2016$. Stanley depth has been studied extensively in the last two decades see for example \cite{MI,IQ,pop,FA,4}.
Let $I$ be a monomial ideal of $S$. It is known in general that the depth of the powers of $I$, $\depth(S/I^t)$, stabilize
for large $t$. Indeed this follows from the general theorems that apply to any graded ideal
of $S$. In particular, by \cite{B} $\min\{\depth(S/I^t)\}\leq n-l(I)$, where $l(I)$ is the analytic
spread of $I$, and the minimum is taken over all powers $t$. In \cite{Br}, Brodmann showed
that for sufficiently large $t$, $\depth(S/I^t)$ is a constant, and this constant is bounded
above by $n-l(I)$. However, relatively little is known about $\depth(S/I^t)$ for specific values of $t$ other
than $t=1$. For some classes of powers of monomial ideals for which values or bounds are known we refer the readers to \cite{HH,LS,SM}.

Let $G$ be a finite, undirected and simple graph on $m$ vertices $v_1,v_2,\dots,v_m$. The \emph{edge ideal} $I(G)$ of the graph $G$ is the ideal of $S$ generated by all monomials of the form $x_ix_j$ such that $\{v_i,v_j\}$ is an edge of $G$. Let $m\geq 2$. A \emph{path} $P_m$ of length $m-1$ is a graph on $m$ vertices such that the vertex set of $P_m$ can be ordered in a way that whenever two vertices are consecutive in the list, there is an edge between them. A \emph{tree} is a graph in which any two vertices are connected by exactly one path. The \emph{diameter} of a connected graph $G$ is the maximum distance between any two vertices, where the distance between two vertices is given by the minimum length of a path connecting the vertices. For $t\geq 1$, Morey gave a lower bound for depth of $S/I^t(T)$ when $T$ is a tree in \cite{SM}, in terms of the diameter of $T$. Later on, in \cite{FA} Pournaki et. al. proved that this lower bound also serves as a lower bound for Stanley depth of $S/I^t(T)$. This lower bound being dependent on the diameter of a tree is weak in general.

The main focus of this paper is to give a better lower bound for some classes of trees. These bounds are independent of the diameters of the trees we considered and are much better than the bounds given in \cite{SM,FA}. Note that the lower bound for the depth of an edge ideal of a tree also provide a lower bound on the power for which the depth stabilizes.
Our work encompasses the computation of lower bounds for depth and Stanley depth of the powers of the edge ideals associated with some classes of caterpillar and lobster trees. The lower bound for the caterpillar trees depends on the power of the edge ideal, the number of leaves and the order of the path, see Theorem \ref{th2} and Corollary \ref{cor3}, while for the lobster trees it depends upon the power of the edge ideal and the number of near leaves, see Theorem \ref{thss} and Corollary \ref{cor4}. These parameters collectively make much sharper bounds than the bounds given in \cite{SM,FA}. We gratefully acknowledge the use of the computer algebra system CoCoA (\cite{COC}).
	\section{Definitions and Notations}
We start this section with a review of some notations and definitions, for more details, see \cite{JA,V}. Note that by abuse of notation, $x_i$ will at times be used to denote both a vertex of a graph $G$ and the corresponding variable of the polynomial ring $S$. Let $G$ be a graph with $V(G):=\{x_1,x_2,\dots,x_m\}$ and edge set $E(G)$. For a vertex $x_i$ of $G$ the set $N(x_i):=\{x_j \,|\, x_ix_j \in E(G) \}$ is called the \emph{neighborhood} of the vertex $x_i$. A vertex $x_i$ is called a \textit{leaf} (or \emph{pendant vertex}) if $N(x_i)$ has cardinality one and $x_i$ is called \textit{isolated} if $N(x_i)=\emptyset$. The \emph{parity} of an integer is its attribute of being even or odd. A graph with one vertex and no edges is called a \emph{trivial graph}. An \emph{internal vertex} is a vertex in a tree which is not a leaf. A graph with one internal vertex and $k$ leaves is called a $k$-\emph{star}, denoted by $\mathcal{S}_k$. Note that $\mathcal{S}_0$ is a trivial graph. A \emph{caterpillar tree} is a tree in which the removal of all pendant vertices results in a path. A \emph{lobster tree} is a tree with the property that the removal of pendant vertices leaves a caterpillar.
		
		\begin{Definition}\label{def1}
		{\em Let $n\geq 1$ and $k\geq 2$ be integers and $P_n$ be a path on $n$ vertices $\{u_1,u_2,\dots,u_n\}$ that is $E(P_n)=\{u_iu_{i+1}:1\leq i\leq n-1\}$ (for $n=1$, $E(P_n)=\emptyset$). We define a graph on $nk$ vertices by attaching $k-1$ pendant vertices at each $u_i$. We denote this graph by ${P}_{n,k}$.}
	\end{Definition}
	\begin{figure}[h!]
		\centering		\includegraphics[width=8cm]{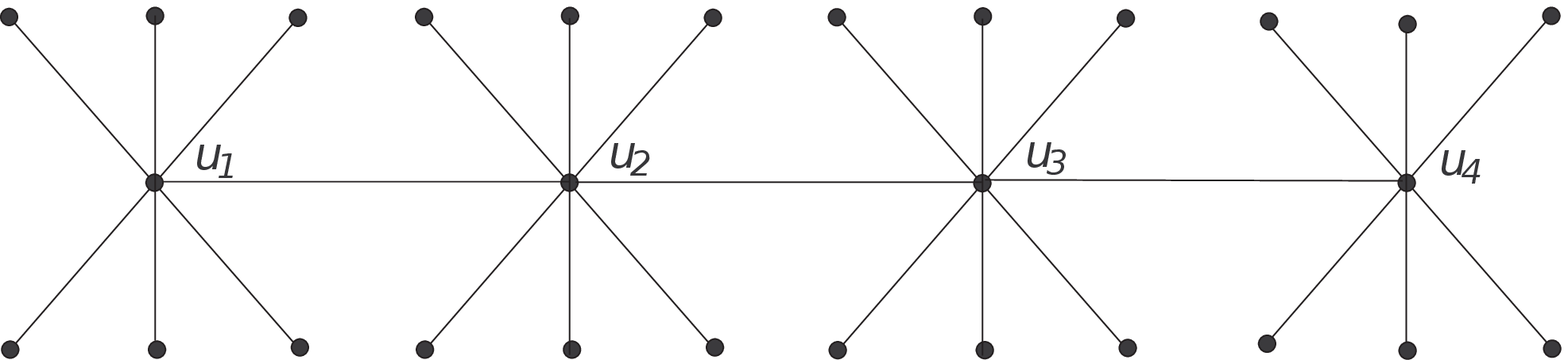}
		\caption{$P_{4,7}$}\label{fig1}
	\end{figure}
\noindent For example of $P_{n,k}$ see Fig. \ref{fig1}. \\

Let $n\geq 2$, $k\geq2$ and $l\geq 1$ be integers with $l\in [k]:=\{1,2,\dots,k\}$. Let $P_{n,k,l}$ be a graph which is obtained by removing $k-l$ pendant vertices attached to the vertex $u_n$ of the graph $P_{n,k}$. Note that $P_{n,k,k}=P_{n,k}$. For examples of $P_{n,k,l}$ see Fig. \ref{fig2}. It is easy to see that $P_{n,k,l}$ belongs to the family of caterpillar graphs.
\begin{Remark}
{\em In Definition \ref{def1}, $P_{1,k}$ represents a $(k-1)$-star.
}
\end{Remark}
\begin{figure}[h!]
	\centering
	\includegraphics[width=15cm]{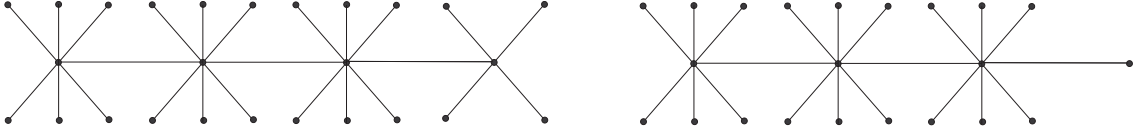}	
	\caption{From left to right, $P_{4,7,5}$ and $P_{4,7,1}$, respectively.}	\label{fig2}
\end{figure}
\begin{Definition}{\em
		Let $r\geq 2$ and $p\geq 1$ be integers. Let $\mathcal{S}_r$ be a star on $r+1$ vertices say $\{v_1,v_2,\dots, v_{r},v_{r+1}\}$ with $v_{r+1}$ as a central vertex. We define a graph by adding $p$ pendant vertices to each vertex $v_i$ with $1\leq i\leq r$. We denote this graph by $\mathcal{S}_{r,p}$.}
\end{Definition}
\begin{figure}[h!]
		\centering		\includegraphics[width=4cm]{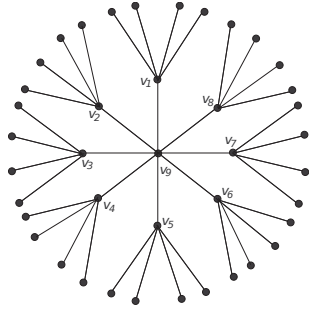}
		\caption{$\mathcal{S}_{8,4}$}\label{fig31}
	\end{figure}
\noindent For example of $\mathcal{S}_{r,p}$ see Fig. \ref{fig31}.\\

Let $q\geq 0$ with $q\leq p$ be an integer, then $\mathcal{S}_{r,p,q}$ is a graph which is obtained by removing $p-q$ leaves from exactly one $v_i$. Clearly $\mathcal{S}_{r,p,p}=\mathcal{S}_{r,p}$. For examples of $\mathcal{S}_{r,p,q}$ see Fig. \ref{fig4}. \\
\begin{figure}[h!]
		\centering		\includegraphics[width=8.5cm]{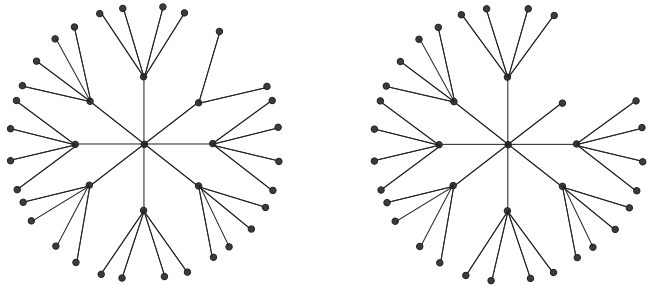}
		\caption{From left to right, $\mathcal{S}_{8,4,2}$ and $\mathcal{S}_{8,4,0}$, respectively}\label{fig4}
	\end{figure}

In order to make the paper self contained we recall some known results that we use in this paper.

\begin{Lemma}[{\cite[Proposition 1.2.9]{BH}}] (\text{Depth Lemma})
		If
		$0
		\longrightarrow\ A_1 \longrightarrow\ A_2
		\longrightarrow\ A_3 \longrightarrow\ 0,$
		is a short exact sequence of $\mathbb{Z}^m$-graded $S$-modules, then
		\begin{enumerate}
			\item $\depth A_2\geq \min\{\depth A_1,\depth A_3\},$
			\item $\depth A_1\geq \min\{\depth A_2,\depth A_3+1\},$
			\item $\depth A_3\geq \min\{\depth A_2,\depth A_1-1\}.$
		\end{enumerate}	
\end{Lemma}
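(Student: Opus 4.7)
The plan is to derive all three inequalities uniformly from the long exact sequence of local cohomology with respect to the graded maximal ideal $\mathfrak{m}=(x_1,\dots,x_m)$ induced by the given short exact sequence. I would start from the standard cohomological characterization
\[
\depth M = \min\{i\geq 0 : H^i_{\mathfrak{m}}(M)\neq 0\},
\]
valid for any finitely generated $\mathbb{Z}^m$-graded $S$-module $M$, with the convention that the minimum of the empty set is $+\infty$. An equivalent route goes through $\Ext^i_S(K,-)$, but the local cohomology version keeps the index shifts transparent and avoids explicitly handling projective resolutions.

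Next I would apply the cohomological $\delta$-functor $H^\bullet_{\mathfrak{m}}(-)$ to $0\to A_1\to A_2\to A_3\to 0$, producing the long exact sequence
\[
\cdots \to H^{i-1}_{\mathfrak{m}}(A_3)\to H^i_{\mathfrak{m}}(A_1)\to H^i_{\mathfrak{m}}(A_2)\to H^i_{\mathfrak{m}}(A_3)\to H^{i+1}_{\mathfrak{m}}(A_1)\to \cdots .
\]
Each of the three inequalities then drops out by picking an integer $i$ below the relevant minimum and reading off vanishing of the middle term from a three-term subsequence. For (1), I choose $i<\min\{\depth A_1,\depth A_3\}$: the outer terms $H^i_{\mathfrak{m}}(A_1)$ and $H^i_{\mathfrak{m}}(A_3)$ vanish, forcing $H^i_{\mathfrak{m}}(A_2)=0$. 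For (2), I choose $i<\min\{\depth A_2,\depth A_3+1\}$: then $H^i_{\mathfrak{m}}(A_2)=0$ and $H^{i-1}_{\mathfrak{m}}(A_3)=0$ (since $i-1<\depth A_3$), so the segment $H^{i-1}_{\mathfrak{m}}(A_3)\to H^i_{\mathfrak{m}}(A_1)\to H^i_{\mathfrak{m}}(A_2)$ forces $H^i_{\mathfrak{m}}(A_1)=0$. For (3), the argument is symmetric: with $i<\min\{\depth A_2,\depth A_1-1\}$, the segment $H^i_{\mathfrak{m}}(A_2)\to H^i_{\mathfrak{m}}(A_3)\to H^{i+1}_{\mathfrak{m}}(A_1)$ has both outer terms zero, so $H^i_{\mathfrak{m}}(A_3)=0$.

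There is no genuine obstacle here: the statement is purely homological and the content sits entirely in the long exact sequence. The only place that demands real care is the bookkeeping of index shifts — in (2) the threshold must be $\depth A_3+1$ and in (3) it must be $\depth A_1-1$, precisely because in each case the neighboring term in the long exact sequence sits one cohomological degree away from $H^i_{\mathfrak{m}}(A_1)$ or $H^i_{\mathfrak{m}}(A_3)$. The convention $\depth 0 = +\infty$ should be acknowledged so that the vacuous cases (e.g.\ $A_1$ or $A_3$ equal to zero) are handled consistently.
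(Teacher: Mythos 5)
Your argument is correct: all three inequalities follow exactly as you describe from the long exact sequence of local cohomology together with the characterization $\depth M=\min\{i: H^i_{\mathfrak{m}}(M)\neq 0\}$, and the index bookkeeping in (2) and (3) is handled properly. The paper does not prove this lemma but cites Bruns--Herzog, where the standard proof runs the same long-exact-sequence argument for the depth-computing functor $\Ext^i_S(K,-)$ in place of $H^i_{\mathfrak{m}}(-)$, so your route is essentially the same.
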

\noindent A. Rauf proved the following lemma for Stanley depth.
\begin{Lemma}[{\cite[Lemma 2.2]{4}}] \label{le9}
		If
		$0
		\longrightarrow\ A_1 \longrightarrow\ A_2
		\longrightarrow\ A_3 \longrightarrow\ 0,$
		is a short exact sequence of $\mathbb{Z}^m$-graded $S$-modules, then
		$$\sdepth A_2\geq \min\{\sdepth A_1,\sdepth A_3\}.$$
	\end{Lemma}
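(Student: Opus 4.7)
The plan is to construct a Stanley decomposition of $A_2$ directly by splicing together Stanley decompositions of $A_1$ and $A_3$. Identifying $A_1$ with its image in $A_2$, let $\pi\colon A_2 \to A_3$ denote the surjection in the short exact sequence, and fix Stanley decompositions
$$\mathcal{D}_1 \,:\, A_1 = \bigoplus_{i=1}^{r} u_i K[Z_i], \qquad \mathcal{D}_3 \,:\, A_3 = \bigoplus_{j=1}^{s} v_j K[Y_j]$$
realising $\sdepth(A_1)$ and $\sdepth(A_3)$ respectively. Since the sequence is $\mathbb{Z}^m$-graded and each $v_j$ is homogeneous, each $v_j$ admits a homogeneous lift $\tilde v_j \in A_2$ with $\pi(\tilde v_j) = v_j$.

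The candidate decomposition is
$$A_2 \;=\; \bigoplus_{i=1}^{r} u_i K[Z_i] \;\oplus\; \bigoplus_{j=1}^{s} \tilde v_j K[Y_j],$$
and verifying this breaks into three short steps. First, each $\tilde v_j K[Y_j]$ is itself a Stanley space: if $f \in K[Y_j]$ annihilates $\tilde v_j$, then $f v_j = \pi(f \tilde v_j) = 0$ in $A_3$, which forces $f = 0$ by the freeness of $v_j K[Y_j]$ inside $\mathcal{D}_3$. Second, the sum spans $A_2$: given $a \in A_2$, expand $\pi(a) = \sum_j f_j v_j$ via $\mathcal{D}_3$, so $a - \sum_j f_j \tilde v_j \in \ker\pi = A_1$ and hence expands via $\mathcal{D}_1$. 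Third, the sum is direct: if $\sum_i g_i u_i + \sum_j f_j \tilde v_j = 0$ in $A_2$, applying $\pi$ kills the first block, so directness of $\mathcal{D}_3$ gives $f_j = 0$ for every $j$, and then directness of $\mathcal{D}_1$ gives $g_i = 0$ for every $i$.

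Since every summand has $K$-algebra dimension at least $\min\{\sdepth(A_1), \sdepth(A_3)\}$, the spliced decomposition has Stanley depth at least this minimum, whence so does $\sdepth(A_2)$. The main point demanding care---and essentially the only place the hypothesis is used in a nontrivial way---is the existence of \emph{homogeneous} lifts $\tilde v_j$, which is exactly what the $\mathbb{Z}^m$-graded structure of the short exact sequence supplies. Everything else is routine bookkeeping with the two given decompositions.
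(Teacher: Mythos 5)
Your proof is correct and complete: the homogeneous lifts $\tilde v_j$ exist because the surjection is degree-preserving, the lifted spaces are free by pushing any annihilating $f$ down to $A_3$, and the spanning/directness checks are exactly right. The paper itself states this lemma without proof, citing Rauf's \emph{Depth and Stanley depth of multigraded modules} (Lemma 2.2), and your splicing argument is essentially the same as the proof given there.
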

\begin{Lemma}[{\cite[Lemma 3.6]{HV}}] \label{le15}
		Let $I$ be a monomial ideal of $S$. If $S'=S[y]$ is the polynomial ring over $S$ in the variable $y$, then $\depth (S'/IS')=\depth(S/I)+1$ and $\sdepth (S'/IS')=\sdepth(S/I)+1.$
	\end{Lemma}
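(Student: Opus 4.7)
The lemma makes two assertions, and in both the guiding principle is the same: since no minimal generator of $I$ uses the variable $y$, that variable behaves as a ``free'' coordinate on $S'/IS'$.

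For the depth equality, I would first observe that $y$ is a non-zerodivisor on $S'/IS'$, because the monomial generators of $IS'$ involve only the original variables $x_1, \dots, x_m$. Since $y$ lies in the graded maximal ideal of $S'$ and $(S'/IS')/y(S'/IS') \cong S/I$, the classical fact that killing a non-zerodivisor in the maximal ideal decreases depth by exactly one (Bruns--Herzog) gives $\depth(S'/IS') = \depth(S/I) + 1$.

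For the Stanley depth, the inequality $\sdepth(S'/IS') \geq \sdepth(S/I) + 1$ is immediate from an explicit construction: starting from any Stanley decomposition $S/I = \bigoplus_{j} u_j K[Z_j]$ of Stanley depth $d$, the family $\{u_j K[Z_j \cup \{y\}]\}_j$ is a Stanley decomposition of $S'/IS'$ of Stanley depth $d+1$, since no $u_j$ and no generator of $I$ involves $y$ and so adjoining $y$ preserves both freeness and directness.

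The reverse inequality is the main technical point. Given a Stanley decomposition $S'/IS' = \bigoplus_j u_j K[Z_j]$ realizing Stanley depth $d$, I would identify $S/I$ with the $K$-subspace of $S'/IS'$ spanned by monomials not involving $y$. Each such monomial lies in a unique summand $u_j K[Z_j]$, and necessarily $y \nmid u_j$. Setting $V_j := Z_j \setminus \{y\}$ for those indices and restricting, I would claim $S/I = \bigoplus_{y \nmid u_j} u_j K[V_j]$ is a Stanley decomposition: directness is inherited from the original decomposition, and each $u_j K[V_j]$ remains a free $K[V_j]$-module because $u_j K[Z_j]$ was free over $K[Z_j]$. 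Since $|V_j| \geq |Z_j| - 1 \geq d - 1$, this yields $\sdepth(S/I) \geq d - 1$, completing the argument. The main obstacle is simply recognizing this restriction-and-delete recipe; once identified, the required verifications (directness and freeness of the restricted pieces) are routine.
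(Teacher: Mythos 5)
Your argument is correct and complete. Note that the paper itself offers no proof of this lemma; it is imported verbatim as \cite[Lemma 3.6]{HV}, so there is no internal proof to compare against. Your route is the standard one: for depth, $y$ is a non-zerodivisor on $S'/IS'$ with $(S'/IS')/y(S'/IS')\cong S/I$, and the regular-element formula for depth gives the equality; for Stanley depth, adjoining $y$ to every $Z_j$ gives $\sdepth(S'/IS')\ge \sdepth(S/I)+1$, and your restriction-and-delete step (keeping only the summands with $y\nmid u_j$ and setting $V_j=Z_j\setminus\{y\}$) gives the reverse inequality, since the monomials of $S'/IS'$ not divisible by $y$ are exactly those of $S/I$ and Stanley spaces are spanned by the monomials they contain. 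This matches the argument in \cite{HV} in substance, so nothing further is needed.
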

\begin{Lemma} [{\cite[Lemma 1.1]{pop}}] \label{ddisjoint}
Let $I \subset K[x_1, \dots,x_r]=S_1$ and $ J \subset K[x_{r+1}, \dots , x_m] =S_2$ be monomial ideals, where $1 < r<m$. Then $$ \depth_S (S/ (IS+JS))= \depth_{S_1}(S_1/I) + \depth_{S_2}(S_2/J).$$
\end{Lemma}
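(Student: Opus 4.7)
The plan is to identify $S/(IS+JS)$, viewed as an $S$-module, with the tensor product $(S_1/I)\otimes_K(S_2/J)$ over the ground field, using the canonical identifications $S=S_1\otimes_K S_2$ and $IS+JS=(I\otimes_K S_2)+(S_1\otimes_K J)$. Once this identification is set up, I would prove the two inequalities separately; the guiding principle is that depths add under $K$-tensor products because a regular sequence on one factor remains regular after tensoring with a flat module, and every module over the field $K$ is flat.

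For the lower bound, set $a=\depth_{S_1}(S_1/I)$ and $b=\depth_{S_2}(S_2/J)$, and choose maximal regular sequences $f_1,\dots,f_a$ in $\mathfrak{m}_1=(x_1,\dots,x_r)$ on $S_1/I$ and $g_1,\dots,g_b$ in $\mathfrak{m}_2=(x_{r+1},\dots,x_m)$ on $S_2/J$. I would then verify inductively that the concatenated list $f_1,\dots,f_a,g_1,\dots,g_b$ is a regular sequence on $(S_1/I)\otimes_K(S_2/J)$: after modding out the first $i-1$ elements the module becomes $(S_1/(I,f_1,\dots,f_{i-1}))\otimes_K(S_2/J)$, and because $S_2/J$ is free over $K$, flatness carries the injectivity of multiplication by $f_i$ from the first factor over to the tensor product. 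Running the same argument in the second factor handles the $g_j$'s, yielding $\depth_S(S/(IS+JS))\geq a+b$.

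For the upper bound I would show that this sequence is maximal by exhibiting a socle element. Since $f_1,\dots,f_a$ is a maximal $S_1/I$-regular sequence, $\mathfrak{m}_1\in\Ass_{S_1}(S_1/(I,f_1,\dots,f_a))$, so there exists a nonzero class $u$ there annihilated by $\mathfrak{m}_1$; analogously pick a nonzero $v\in S_2/(J,g_1,\dots,g_b)$ annihilated by $\mathfrak{m}_2$. Then $u\otimes v$ lives in the corresponding quotient of $S/(IS+JS)$ and is annihilated by the whole $\mathfrak{m}=\mathfrak{m}_1+\mathfrak{m}_2$, blocking any extension of the regular sequence and forcing equality $\depth_S(S/(IS+JS))=a+b$. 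The one step requiring genuine care is verifying that $u\otimes v$ does not vanish in this tensor product, which again comes down to the flatness of both factors over the ground field $K$. Beyond that flatness check, the proof is routine regular-sequence bookkeeping and does not interact with any combinatorial features of the monomial ideals $I$ and $J$.
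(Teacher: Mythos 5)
Your argument is correct. Note that the paper itself does not prove this statement: it is quoted as Lemma 1.1 of the cited reference, so there is no in-paper proof to compare against. Your route is the standard elementary one and it holds up: the identification $S/(IS+JS)\cong (S_1/I)\otimes_K(S_2/J)$ is valid for monomial (indeed arbitrary) ideals $I\subseteq S_1$, $J\subseteq S_2$; flatness of everything over the field $K$ gives that the concatenated sequence stays regular, and right-exactness of the tensor product gives the identification of the successive quotients, so $\depth\geq a+b$. For the reverse inequality, the socle element $u\otimes v$ is nonzero because a tensor product of nonzero vectors over a field is nonzero (split off $u$ and $v$ with linear functionals), it is killed by $\mathfrak{m}=\mathfrak{m}_1+\mathfrak{m}_2$, and Rees's theorem then forces every maximal regular sequence to have length $a+b$. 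The only points worth flagging are bookkeeping ones you essentially address: you need each intermediate quotient to be nonzero (graded Nakayama, or just that a tensor product of nonzero $K$-vector spaces is nonzero), and you should take the $f_i$, $g_j$ inside $\mathfrak{m}$, which is automatic since $\mathfrak{m}_1,\mathfrak{m}_2\subseteq\mathfrak{m}$. An alternative standard proof, closer to what appears in the literature, tensors minimal free resolutions: if $F_\bullet$ resolves $S_1/I$ over $S_1$ and $G_\bullet$ resolves $S_2/J$ over $S_2$, then $F_\bullet\otimes_K G_\bullet$ is a minimal free $S$-resolution of $S/(IS+JS)$, so projective dimensions add and Auslander--Buchsbaum converts this into additivity of depth; that version packages your flatness checks into the K\"unneth theorem, while yours has the advantage of not invoking Auslander--Buchsbaum.
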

\begin{Lemma} [{\cite[Theorem 3.1]{4}}] \label{sdisjoint}
	Let $I \subset K[x_1, \dots,x_r]=S_1$ and $ J \subset K[x_{r+1}, \dots , x_m] =S_2$ be monomial ideals, where $1 < r<m$. Then $$ \sdepth_S(S/ (IS+JS)) \geq \sdepth_{S_1}(S_1/I) + \sdepth_{S_2}(S_2/J).$$
\end{Lemma}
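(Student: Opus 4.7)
The plan is to build a Stanley decomposition of $S/(IS+JS)$ by ``multiplying'' optimal Stanley decompositions of $S_1/I$ and $S_2/J$, and then show that the resulting decomposition has Stanley depth equal to the sum $\sdepth_{S_1}(S_1/I)+\sdepth_{S_2}(S_2/J)$. Since the Stanley depth is defined as the supremum over all Stanley decompositions, this will immediately yield the desired inequality.

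First I would observe that, because the variables $x_1,\dots,x_r$ and $x_{r+1},\dots,x_m$ are disjoint, there is a natural isomorphism of $\ZZ^m$-graded $K$-vector spaces
\[
S/(IS+JS)\;\cong\;(S_1/I)\tensor_K(S_2/J),
\]
with each side having a $K$-basis consisting of the (cosets of) monomials $u\cdot v$ where $u$ is a monomial not in $I$ and $v$ is a monomial not in $J$. Next, choose optimal Stanley decompositions
\[
\mathcal{D}_1:\;S_1/I=\bigoplus_{i=1}^{s} u_iK[Z_i],\qquad \mathcal{D}_2:\;S_2/J=\bigoplus_{j=1}^{t} v_jK[W_j],
\]
with $Z_i\subseteq\{x_1,\dots,x_r\}$, $W_j\subseteq\{x_{r+1},\dots,x_m\}$, $\sdepth(\mathcal{D}_1)=\sdepth_{S_1}(S_1/I)$ and $\sdepth(\mathcal{D}_2)=\sdepth_{S_2}(S_2/J)$.

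Now form the product decomposition
\[
\mathcal{D}:\;S/(IS+JS)=\bigoplus_{i=1}^{s}\bigoplus_{j=1}^{t}(u_iv_j)\,K[Z_i\cup W_j].
\]
I would verify that this is a legitimate Stanley decomposition by two checks: first, that each summand $(u_iv_j)K[Z_i\cup W_j]$ is actually a Stanley space, which follows because $Z_i$ and $W_j$ involve disjoint variables, so the $K[Z_i\cup W_j]$-module $(u_iv_j)K[Z_i\cup W_j]=u_iK[Z_i]\tensor_K v_jK[W_j]$ is free of rank one; second, that the direct sum decomposes $S/(IS+JS)$ as a $K$-vector space, which follows by tensoring the two original direct sum decompositions over $K$ and using the isomorphism from the previous paragraph.

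Finally, the Stanley depth of $\mathcal{D}$ is
\[
\sdepth(\mathcal{D})=\min_{i,j}|Z_i\cup W_j|=\min_i|Z_i|+\min_j|W_j|=\sdepth_{S_1}(S_1/I)+\sdepth_{S_2}(S_2/J),
\]
where disjointness of the variable sets gives the middle equality. By the definition of Stanley depth as a maximum over all Stanley decompositions, we obtain the claimed inequality. The main conceptual obstacle is the verification that the tensor product of Stanley spaces on disjoint variable sets is again a Stanley space, which is essentially a bookkeeping point about freeness of modules over polynomial rings and is where disjointness of the two variable sets is crucial; once this is in hand, the rest is formal. The inequality can be strict because a globally optimal decomposition of $S/(IS+JS)$ need not arise as a product of decompositions from the two factors.
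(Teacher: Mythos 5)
The paper does not prove this lemma itself; it quotes it from Rauf's paper (the cited Theorem 3.1), whose proof is precisely the tensor-product construction you describe. Your argument is correct and complete — the identification of the monomial $K$-basis of $S/(IS+JS)$ with products $uv$, $u\notin I$, $v\notin J$, the verification that each $(u_iv_j)K[Z_i\cup W_j]$ is free because the variable sets are disjoint, and the additivity of $\min_{i,j}|Z_i\cup W_j|$ are exactly the steps of the standard proof — so this matches the approach of the original source.
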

\noindent The following two lemmas play a key role in the proofs of our main theorems.	
\begin{Lemma}[{\cite[Lemma 2.10]{SM}}] \label{le2}
		Let $G$ be a graph and $I=I(G)$. Let $x_i$ be a leaf of $G$ and $x_j$ be the unique neighbor of $x_i$. Then $(I^t:x_ix_j)=I^{t-1}$ for any $t\geq2.$
	\end{Lemma}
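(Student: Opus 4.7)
The plan is to prove both inclusions of $(I^t : x_ix_j) = I^{t-1}$. The inclusion $I^{t-1} \subseteq (I^t : x_ix_j)$ is immediate because $x_ix_j$ is a minimal monomial generator of $I$, so $(x_ix_j)\,I^{t-1} \subseteq I\cdot I^{t-1} = I^t$.

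For the reverse inclusion, since both ideals are monomial, it suffices to take a monomial $u$ with $u x_ix_j \in I^t$ and show $u \in I^{t-1}$. By definition of $I^t$, there exist edges $e_1,\dots,e_t$ of $G$ (i.e.\ monomial generators of $I$) whose product divides $u x_ix_j$. The key input I would exploit is that, because $x_i$ is a leaf with unique neighbor $x_j$, the edge $x_ix_j$ is the only edge of $G$ that contains $x_i$; hence every $e_k$ in which the variable $x_i$ appears must equal $x_ix_j$.

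From here I would split into two cases according to whether some $e_k$ equals $x_ix_j$. If, say, $e_1 = x_ix_j$, then $e_2\cdots e_t \in I^{t-1}$ and it divides $(u x_ix_j)/(x_ix_j) = u$, so $u \in I^{t-1}$. Otherwise no $e_k$ involves $x_i$, and $e_1\cdots e_t$ divides $ux_j$. Let $a_j$ be the exponent of $x_j$ in $u$ and let $r$ be the number of edges among $e_1,\dots,e_t$ containing $x_j$; then $r \le a_j+1$. If $r \le a_j$, a direct exponent-by-exponent comparison shows $e_1\cdots e_t$ already divides $u$, so $u \in I^t \subseteq I^{t-1}$. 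If $r = a_j+1$, I would relabel so that $e_1$ contains $x_j$ and verify that $e_2\cdots e_t \in I^{t-1}$ divides $u$: the $x_j$-exponent drops from $a_j+1$ to $a_j$, the $x_i$-exponent is $0$, and every other variable is unaffected by the passage from $u x_ix_j$ to $u$.

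The only real obstacle is the bookkeeping in the subcase $r = a_j+1$: one has to check simultaneously that dropping a single $x_j$-edge leaves a product which divides $u$ at every variable, and this is exactly where the leaf hypothesis on $x_i$ is essential, since it forbids any $e_k$ from ``borrowing'' the extra $x_i$ that $u x_ix_j$ supplies.
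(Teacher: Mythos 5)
The paper does not prove this lemma; it imports it verbatim from Morey \cite{SM}, and your argument is correct and is essentially the standard proof given there: one inclusion is trivial since $x_ix_j\in G(I)$, and for the other you use that the leaf condition forces any generator divisible by $x_i$ to equal $x_ix_j$, then either peel off that factor or, when no generator involves $x_i$, compare $x_j$-exponents and discard one $x_j$-edge if needed. The exponent bookkeeping in the subcase $r=a_j+1$ checks out, so there is nothing to fix.
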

\begin{Lemma}[{\cite[Lemma 2.5]{SM}}]\label{morey}
Let $I$ be a square-free monomial ideal in a polynomial ring $S$ and let $M$ be a monomial in $S$. If $y$ is a variable such that $y$ does not divide $M$ and $J$ is the extension in $R$ of the minor of $I$ formed by setting $y=0$, then $((I^t:M),y)=((J^t:M),y)$ for any $t\geq 1$.
 \end{Lemma}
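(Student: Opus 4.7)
The plan is to establish the two inclusions separately. The containment $((J^t:M),y) \subseteq ((I^t:M),y)$ is immediate: by construction $J$ is generated by those minimal monomial generators of $I$ not divisible by $y$, so $J \subseteq I$, hence $J^t \subseteq I^t$, and therefore $(J^t:M) \subseteq (I^t:M)$, which yields the desired inclusion after adjoining $y$ on both sides.

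For the reverse containment, the plan is to take an arbitrary $f \in (I^t:M)$ and produce a decomposition $f = f_0 + yh$ with $f_0 \in (J^t:M)$ and $h \in S$. First I would expand $f = \sum_{i\geq 0} f_i\, y^i$, where each $f_i$ is a polynomial in the variables other than $y$, and set $f_0$ to be this $y$-free part. Since $y \nmid M$, the monomials of $f_0 M$ are precisely the $y$-free monomials of $fM$, and because $I^t$ is a monomial ideal with $fM \in I^t$, each such monomial lies in $I^t$. It therefore suffices to show that every monomial of $I^t$ not divisible by $y$ lies in $J^t$.

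For that step I would use the standard description of $I^t$: its monomial generators are products $g_{j_1}\cdots g_{j_t}$ of $t$ minimal monomial generators of $I$ (with repetition allowed). Any monomial $u \in I^t$ is therefore divisible by such a product; if in addition $y \nmid u$, then $y$ divides none of the factors $g_{j_i}$, so each $g_{j_i}$ is a generator of $J$, placing $u$ in $J^t$. Applying this monomial-by-monomial to $f_0 M$ yields $f_0 M \in J^t$, i.e., $f_0 \in (J^t:M)$, and consequently $f = f_0 + yh \in ((J^t:M),y)$, completing the reverse inclusion.

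I do not anticipate a serious obstacle: the argument is essentially a support calculation, and the square-freeness of $I$ is not actually invoked anywhere above, so the statement should hold in slightly greater generality. The only point requiring a little care is the unique decomposition $f = f_0 + yh$ and the disjointness of supports between $f_0 M$ and $yhM$: every monomial of $f_0 M$ is $y$-free while every monomial of $yhM$ contains $y$ to positive degree, so the two parts do not interfere and the $y$-free component of $fM$ can be isolated unambiguously.
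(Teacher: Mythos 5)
Your proof is correct. The paper does not prove this lemma itself --- it is quoted verbatim from Morey's paper (\cite[Lemma~2.5]{SM}) --- and your argument is essentially the standard one given there: the key step in both is that a monomial of $I^t$ not divisible by $y$ must be divisible by a product of $t$ minimal generators of $I$ none of which involves $y$, hence lies in $J^t$. The only simplification available is that since $I$ is a monomial ideal and $M$ is a monomial, $(I^t:M)$ is itself a monomial ideal, so you could check the reverse inclusion on monomial generators $u$ of $(I^t:M)$ with $y\nmid u$ directly and skip the decomposition $f=f_0+yh$; your handling of that decomposition is nonetheless sound, and your observation that square-freeness is never used is also correct.
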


\begin{Proposition} [{\cite[Theorem 2.6 and 2.9]{AA}}] \label{2}
		If $I=I(\mathcal{S}_{m-1})$, which is a square-free monomials ideal of $S$, then
		$\depth(S/I) = \sdepth(S/I) = 1$ and 	$\depth(S/I^t) , \sdepth(S/I^t) \geq 1.$
\end{Proposition}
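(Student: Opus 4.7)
The plan is to organize the argument around two independent pieces: the upper bounds $\depth(S/I) \leq 1$ and $\sdepth(S/I) \leq 1$ at $t=1$, and the uniform lower bounds $\depth(S/I^t), \sdepth(S/I^t) \geq 1$ valid for every $t \geq 1$. Throughout, I label the central vertex of $\mathcal{S}_{m-1}$ as $x_m$ and its leaves as $x_1,\ldots,x_{m-1}$, so that $I = x_m\cdot(x_1,\ldots,x_{m-1})$.

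For the upper bounds I would begin with the primary decomposition $I = (x_m) \cap (x_1,\ldots,x_{m-1})$. This exhibits $(x_1,\ldots,x_{m-1}) \in \Ass(S/I)$, so $\depth(S/I) \leq \dim S/(x_1,\ldots,x_{m-1}) = 1$. For the Stanley depth upper bound, I would reason directly: in any Stanley decomposition of $S/I$, the Stanley space $u\,K[Z]$ containing the residue class of $x_m$ must satisfy $Z \subseteq \{x_m\}$, since $x_m x_i \in I$ for every $i < m$; this piece therefore has dimension at most $1$, forcing $\sdepth(S/I) \leq 1$.

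For the lower bounds, the key step is to verify the primary decomposition $I^t = (x_m^t) \cap (x_1,\ldots,x_{m-1})^t$, which follows from the monomial membership criterion that $x_1^{a_1}\cdots x_m^{a_m} \in I^t$ if and only if $a_m \geq t$ and $a_1+\cdots+a_{m-1} \geq t$. Since $(x_m^t)$ is $(x_m)$-primary and $(x_1,\ldots,x_{m-1})^t$ is $(x_1,\ldots,x_{m-1})$-primary in $S$, this gives $\Ass(S/I^t) = \{(x_m),(x_1,\ldots,x_{m-1})\}$; in particular $\mathfrak{m}$ is not associated, so $\depth(S/I^t) \geq 1$. For the Stanley depth I would partition the monomial $K$-basis of $S/I^t$ by the exponent of $x_m$ to produce the explicit decomposition
$$S/I^t \;=\; \bigoplus_{k=0}^{t-1} x_m^k\,K[x_1,\ldots,x_{m-1}] \;\oplus\; \bigoplus_{\substack{a_1+\cdots+a_{m-1} < t \\ a_i \geq 0}} x_1^{a_1}\cdots x_{m-1}^{a_{m-1}}\,x_m^t\,K[x_m],$$
in which every Stanley space has dimension $m-1$ or $1$; hence $\sdepth(S/I^t) \geq 1$.

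The only real obstacle is combinatorial: checking the membership criterion for $I^t$ that powers the primary decomposition, and then checking that the two summands above genuinely partition the monomials of $S/I^t$ (the first covers all monomials of $x_m$-degree less than $t$, the second covers those of $x_m$-degree at least $t$ that survive modulo $I^t$). No induction on $t$ or on $m$ is required, and no appeal beyond the associated-prime characterization of depth and the direct counting of Stanley spaces.
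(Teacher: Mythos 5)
This proposition is imported into the paper from \cite{AA} and is stated without proof, so there is no internal argument to compare against; your proposal must therefore stand on its own, and it does. The primary decomposition $I=(x_m)\cap(x_1,\dots,x_{m-1})$ and, more generally, $I^t=(x_m^t)\cap(x_1,\dots,x_{m-1})^t$ is correct (both sides consist exactly of the monomials with $a_m\geq t$ and $a_1+\cdots+a_{m-1}\geq t$), so $\mathfrak{m}\notin\Ass(S/I^t)$ and $\depth(S/I^t)\geq 1$ for all $t$, while the associated prime $(x_1,\dots,x_{m-1})$ forces $\depth(S/I)\leq 1$. Your Stanley-depth upper bound is also sound: the Stanley space $uK[Z]$ containing the class of $x_m$ has $u\in\{1,x_m\}$, and in either case $Z$ cannot meet $\{x_1,\dots,x_{m-1}\}$ since $x_mx_i\in I$, so $|Z|\leq 1$. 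The explicit decomposition of $S/I^t$ by the exponent of $x_m$ is a genuine finite partition of the monomial basis (the pieces $x_m^kK[x_1,\dots,x_{m-1}]$ for $k<t$ cover the monomials with $a_m<t$, and the one-dimensional pieces cover those with $a_m\geq t$ and $a_1+\cdots+a_{m-1}<t$), giving $\sdepth(S/I^t)\geq 1$. Compared with the technique used elsewhere in this paper for such bounds (short exact sequences with colon ideals, the Depth Lemma, and Lemmas \ref{le2} and \ref{morey}, with induction on $t$), your argument is more elementary and fully explicit: it buys a closed-form Stanley decomposition and avoids induction entirely, at the cost of being special to the star (it exploits that $I$ is $x_m$ times a prime power, which is what makes the primary decomposition of $I^t$ so clean).
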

\begin{Lemma} [{\cite[Lemma 2.6]{SM}}]  \label{le7}
		Let $G$ be a bipartite graph and $I=I(G)$. Then for all $t \geq 1$,
		$$\depth(S/I^t) \geq 1.$$
\end{Lemma}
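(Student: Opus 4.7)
The plan is to show that the graded maximal ideal $\mm := (x_1, \ldots, x_m)$ is not an associated prime of $S/I^t$; since $\depth(S/I^t) = 0$ is equivalent to $\mm \in \Ass(S/I^t)$, ruling this out yields $\depth(S/I^t) \geq 1$ at once.

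The cornerstone of the argument is the classical theorem of Simis, Vasconcelos and Villarreal: the edge ideal of a bipartite graph is normally torsion-free, meaning $I^t = I^{(t)}$ for every $t \geq 1$, where $I^{(t)}$ denotes the $t$-th symbolic power. Consequently, $\Ass(S/I^t) = \Ass(S/I^{(t)}) = \Ass(S/I) = \Min(I)$, so the associated primes of every power of $I$ coincide with the minimal primes of $I$, which are precisely the ideals $P_C = (x_i : x_i \in C)$ indexed by the minimal vertex covers $C$ of $G$.

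It then remains to verify that $\mm$ does not arise as any such $P_C$, i.e., that $V(G)$ is never a minimal vertex cover. This is immediate: for any vertex $x_i$, the set $V(G) \setminus \{x_i\}$ is still a vertex cover, since in a simple graph there is no loop at $x_i$; hence $V(G)$ cannot be minimal, and no $P_C$ equals $\mm$. Thus $\mm \notin \Ass(S/I^t)$ and the conclusion $\depth(S/I^t) \geq 1$ follows.

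The only nontrivial input is the equality $I^t = I^{(t)}$ for bipartite edge ideals; this is the main obstacle and is the substantive content also of Morey's original proof. Once it is in hand, everything else reduces to a short combinatorial observation about minimal vertex covers of simple graphs together with the standard dictionary between associated primes and depth.
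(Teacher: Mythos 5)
Your proof is correct: the reduction of $\depth(S/I^t)\geq 1$ to $\mm\notin\Ass(S/I^t)$, the appeal to the Simis--Vasconcelos--Villarreal theorem that bipartite edge ideals are normally torsion-free so that $\Ass(S/I^t)=\Min(I)$, and the observation that $V(G)$ is never a minimal vertex cover together give the claim. The paper itself states this lemma without proof, citing Morey, and your argument is essentially the same as the one in that reference.
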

\begin{Theorem}[{\cite[Theorem 1.4]{FF}}]\label{le12}
For a finitely generated $\mathbb{Z}^m$-graded $S$-module $N$, if $\sdepth(N)=0$ then $\depth(N)=0$.
\end{Theorem}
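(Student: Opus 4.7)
The plan is to prove the contrapositive: if $\depth(N)\geq 1$, then $\sdepth(N)\geq 1$. Since $N$ is $\mathbb{Z}^m$-graded and finitely generated, the condition $\depth(N)\geq 1$ is equivalent to $\mathfrak{m}\notin\Ass(N)$, which in turn says that for every nonzero homogeneous element $u\in N$ there exists a variable $x_i$ with $x_iu\neq 0$. The aim is to produce a Stanley decomposition of $N$ in which every Stanley space has positive dimension.

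First I would pass to a $\mathbb{Z}^m$-graded prime filtration
$$0=N_0\subsetneq N_1\subsetneq\cdots\subsetneq N_r=N$$
with successive quotients of the form $N_j/N_{j-1}\cong (S/P_j)(-a_j)$, where each $P_j$ is a monomial prime ideal (generated by a subset of the variables) and $a_j\in\mathbb{Z}^m$. It is standard that $\Ass(N)\subseteq\{P_1,\dots,P_r\}$, although the reverse inclusion may fail. Using the hypothesis $\mathfrak{m}\notin\Ass(N)$, the filtration can be refined so that no factor $(S/P_j)(-a_j)$ with $P_j=\mathfrak{m}$ persists: any such factor would contribute a socle element to $N$, and the assumption forces that element to lie in the image of a later filtration stage, so it can be cancelled by a suitable refinement.

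Next, each remaining factor $(S/P_j)(-a_j)$ with $P_j\subsetneq\mathfrak{m}$ admits a tautological Stanley decomposition given by a single Stanley space $u_jK[Z_j]$, where $Z_j=\{x_i : x_i\notin P_j\}$ is nonempty and therefore $|Z_j|\geq 1$. I would then assemble these decompositions into a Stanley decomposition of $N$ by induction on $r$: at the $j$-th step, given Stanley decompositions of $N_{j-1}$ and of $N_j/N_{j-1}$ with every Stanley space of dimension at least one, apply Lemma~\ref{le9} to the short exact sequence
$$0\to N_{j-1}\to N_j\to N_j/N_{j-1}\to 0$$
and lift monomial representatives of the generators of the quotient to $N_j$; concatenating the two decompositions produces a Stanley decomposition of $N_j$ in which every Stanley space still has dimension $\geq 1$. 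Taking $j=r$ yields $\sdepth(N)\geq 1$.

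The main obstacle will be the refinement step: a generic $\mathbb{Z}^m$-graded prime filtration may well include $S/\mathfrak{m}$ factors that are \emph{redundant} in the sense that they do not contribute to $\Ass(N)$, yet their presence prevents the naive concatenation above from producing a Stanley decomposition of positive depth. Making the cancellation of such factors explicit — exhibiting a prime filtration whose factors all satisfy $P_j\subsetneq\mathfrak{m}$ whenever $\mathfrak{m}\notin\Ass(N)$ — requires the detailed structure theory of finitely generated $\mathbb{Z}^m$-graded $S$-modules and is the technical heart of the argument of \cite{FF}.
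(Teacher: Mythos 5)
The paper does not prove this statement at all --- it is quoted from \cite{FF} --- so there is no internal argument to compare yours with; your proposal has to stand on its own, and it does not. The gap is exactly where you place it, and it is not a technicality: your reduction of the contrapositive to the existence of a prime filtration $0=N_0\subsetneq\cdots\subsetneq N_r=N$ in which no factor $(S/P_j)(-a_j)$ has $P_j=\mathfrak{m}$ is a reduction to a \emph{strictly stronger} statement. Such a filtration would show that the invariant $\mathrm{fdepth}(N)$ of Herzog--Vladoiu--Zheng \cite{HV} is at least $1$, and the known inequality there is $\mathrm{fdepth}(N)\le\min\{\depth(N),\sdepth(N)\}$, with no general implication in the direction you need. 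Everything else in your outline is routine (a prime filtration induces a Stanley decomposition with the same factors, or one can iterate Lemma \ref{le9} along the filtration to get $\sdepth(N)\ge\min_j|Z_j|$), so the deferred ``refinement step'' is not a detail to be outsourced to \cite{FF}: it is the entire content of the theorem, and conceding that it ``requires the detailed structure theory'' is conceding that no proof has been given.

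Worse, the heuristic you offer for why the refinement should exist is false. A factor $N_j/N_{j-1}\cong S/\mathfrak{m}$ does not ``contribute a socle element to $N$'': it produces a homogeneous $v\in N_j\setminus N_{j-1}$ with $\mathfrak{m}v\subseteq N_{j-1}$, and $\mathfrak{m}v$ is typically nonzero. Already for $N=S=K[x]$, which has zero socle, the prime filtration $0\subset(x)\subset S$ has the factor $S/(x)\cong K$; the hypothesis $\mathfrak{m}\notin\Ass(N)$ does nothing to forbid such factors. The phrase ``lie in the image of a later filtration stage'' also has no content, since $v$ automatically lies in every later stage. To make this route work you would have to actually prove that $\depth(N)\ge1$ forces the existence of a prime filtration avoiding $\mathfrak{m}$, which you should not assume; the alternative is to abandon prime filtrations and argue in the forward direction, extracting from the hypothesis $\sdepth(N)=0$ (i.e., from the fact that \emph{every} Stanley decomposition contains a zero-dimensional space $u_jK$) a nonzero homogeneous element annihilated by $\mathfrak{m}$. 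As it stands, the proposal is an accurate description of why the theorem is nontrivial rather than a proof of it.
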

A forest is a graph with each connected component a tree. The following theorems give lower bounds for depth and Stanley depth of powers of an edge ideal corresponding to a forest.
\begin{Theorem} [{\cite[Theorem 3.4]{SM}}]  \label{rth1}
	Let $G$ be a forest having $s$ number of connected components $G_1$, $G_2$ , \dots , $G_s$. Let $I=I(G)$ and $d_j$ be the diameter of $G_j$ and suppose $d=\max\limits_j \{d_j\} $. Then for $t \geq 1$ $$\depth(S/I^t) \geq \max \big \{\big \lceil \frac{d-t+2}{3} \big \rceil +s-1 , s \big \}.$$
\end{Theorem}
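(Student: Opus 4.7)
My plan is a double induction on $t$ and the diameter $d$, reducing to the single tree case via a disjoint support argument. For the multi-component reduction, since the components $G_1,\dots,G_s$ live on pairwise disjoint variable sets, I would iterate Lemma~\ref{ddisjoint} (extended to the mixed-exponent powers of a sum of ideals on disjoint supports) together with Lemma~\ref{le15} for isolated vertices. Using Lemma~\ref{le7} to guarantee that each additional component contributes at least $+1$ to the depth, this shows that the bound for the full forest is at least the single-component bound plus $s-1$, accounting for the additive ``$+s-1$''.

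For the single-tree case, the base case $t=1$ is the classical inequality $\depth(S/I(T)) \geq \lceil (d+1)/3 \rceil$, which I would prove by induction on $d$: pick a leaf $x_i$ at one end of a longest path with unique neighbor $x_j$ and use
$$0 \to S/(I:x_i) \xrightarrow{\cdot x_i} S/I \to S/(I,x_i) \to 0.$$
By Lemma~\ref{morey}, $S/(I,x_i) = S'/I(G \setminus x_i)$ with diameter dropping by at most $1$, while $(I:x_i) = (x_j) + I(G \setminus N[x_i])$ corresponds to a forest whose diameter drops by at least $2$. The Depth Lemma combined with Lemma~\ref{ddisjoint} then closes the induction on $d$.

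For the inductive step $t \geq 2$, pick a leaf $x_i$ at an end of a longest path of $G$ with neighbor $x_j$, and use
$$0 \to S/(I^t : x_i x_j) \xrightarrow{\cdot x_i x_j} S/I^t \to S/(I^t, x_i x_j) \to 0.$$
Lemma~\ref{le2} gives $(I^t : x_i x_j) = I^{t-1}$, so by the inductive hypothesis the left term has depth $\geq \lceil (d-t+3)/3 \rceil + s - 1$. For the right term I would split further: applying Lemma~\ref{morey} with $y = x_i$ rewrites $((I^t, x_i x_j), x_i)$ in terms of the smaller graph $G \setminus x_i$, and iterating one additional step along the longest path yields $\depth(S/(I^t, x_i x_j)) \geq \lceil (d-t+2)/3 \rceil + s - 1$. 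The Depth Lemma then delivers the claimed bound, while the alternative ``$\max\{\,\cdot\,,s\}$'' branch is ensured for small $d$ by applying Lemma~\ref{le7} component-wise.

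The main obstacle is the careful bookkeeping in Step 3. The factor $3$ in the denominator reflects the fact that one must advance three consecutive vertices along a longest path before both the diameter and the exponent can be decremented in a way compatible with the inductive hypothesis. Identifying the correct sequence of colon and ``set-to-zero'' reductions so that each intermediate quotient is again the edge ideal of a strictly smaller forest, and verifying that the diameter drop and the power drop balance to give exactly $\lceil (d-t+2)/3 \rceil$ rather than a weaker linear bound, is the key technical challenge.
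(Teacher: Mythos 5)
Note first that Theorem~\ref{rth1} is quoted verbatim from Morey \cite{SM}; this paper gives no proof of it, so the only internal points of comparison are the analogous inductive arguments in Theorems~\ref{th2} and~\ref{thss}. Your general machinery --- induction on $t$, the colon identity $(I^t:x_ix_j)=I^{t-1}$ from Lemma~\ref{le2}, the set-to-zero Lemma~\ref{morey}, and the Depth Lemma --- is exactly the machinery of the original proof and of this paper. But your opening reduction contains a genuine gap. Lemma~\ref{ddisjoint} computes $\depth S/(IS+JS)$ for ideals supported on disjoint variable sets; it says nothing about $\depth S/(I+J)^t$ for $t\geq 2$, because $(I+J)^t=\sum_{a+b=t}I^aJ^b$ is \emph{not} of the form $I'S+J'S$ with $I'$ and $J'$ in disjoint variables. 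The ``extension to mixed-exponent powers'' you invoke in passing is a substantial theorem in its own right (a depth formula for powers of sums of disjointly supported ideals, involving a minimum over all decompositions $t=a+b$ and $t+1=a+b$), and it cannot be obtained by ``iterating'' Lemma~\ref{ddisjoint}. Hence the step ``forest bound $=$ single-tree bound $+(s-1)$'' is unjustified for $t\geq 2$. The correct route (Morey's, and the one imitated in Theorems~\ref{th2} and~\ref{thss}) is to run the induction on $t$ for the whole forest at once: the distinguished leaf and its neighbor are chosen in a component of maximal diameter, the colon $(I^t:x_ix_j)=I^{t-1}$ is again a power of the \emph{full} forest ideal, and the set-to-zero quotients are edge ideals of smaller forests with possibly more components; Lemma~\ref{ddisjoint} is only ever applied at the first power.

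The second problem is that the heart of the induction --- showing $\depth S/(I^t,x_ix_j)\geq\lceil\frac{d-t+2}{3}\rceil+s-1$, or in the original version bounding $S/(I^t,x_j)$ together with the chain of intermediate colon modules $S/((I^t:x_j),\dots)$ --- is precisely the part you defer to ``careful bookkeeping.'' This is not a cosmetic omission: the same issue already defeats your sketch of the base case $t=1$, where deleting a single end-leaf drops the diameter by only $1$, so the inductive bound $\lceil\frac{d}{3}\rceil$ for $S/(I,x_i)$ falls one short of $\lceil\frac{d+1}{3}\rceil$ whenever $3\mid d$; one must strip an entire branch (leaf, its neighbor, and the colon by the near-leaf vertex) so that the numerator advances by $3$ before the Depth Lemma closes the loop. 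As written, the balance between the diameter drop and the power drop that produces exactly $\lceil\frac{d-t+2}{3}\rceil$ is asserted rather than proved, so the proposal is a plausible plan in the right spirit but not yet a proof.
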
	
\begin{Theorem} [{\cite[Theorem 2.7]{FA}}]  \label{rth2}
	Let $G$ be a forest having $s$ number of connected components $G_1$, $G_2$ , \dots , $G_s$. Let $I=I(G)$ and $d_j$ be the diameter of $G_j$ and suppose $d=\max\limits_j \{d_j\} $. Then for $t \geq 1$ $$\sdepth(S/I^t) \geq \max \big \{\big \lceil \frac{d-t+2}{3} \big \rceil +s-1 , s \big \}.$$
\end{Theorem}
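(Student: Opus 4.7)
The plan is to mirror Morey's proof of Theorem \ref{rth1} (the depth analogue), replacing each invocation of the Depth Lemma with Lemma \ref{le9}, and each use of Lemma \ref{ddisjoint} with Lemma \ref{sdisjoint}. Since both substitutions only use the ``lower-bound'' direction, any lower bound that Morey extracts for depth should transport over to the Stanley depth setting without loss, provided one is careful that the inequality-only nature of Lemma \ref{sdisjoint} is never inverted in the argument.

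To handle the $s$ components, I would reduce iteratively. Writing $G = G_1 \sqcup \cdots \sqcup G_s$ in disjoint variable sets and $I = I(G_1) + \cdots + I(G_s)$, one applies Lemma \ref{sdisjoint} together with Lemma \ref{le15}, coupled with a careful expansion of $(I_1 + \cdots + I_s)^t$ via pendant-edge colon sequences, to peel components off one by one. Each peel contributes $+1$ to $\sdepth$ (this is where Lemma \ref{le15} and the squarefree baseline $\sdepth(S/I(G_j)) \geq 1$ enter), producing the additive ``$+(s-1)$'' shift. It then remains to prove $\sdepth(S/I(T)^t) \geq \max\{\lceil (d-t+2)/3\rceil,\,1\}$ for a single tree $T$ of diameter $d$.

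For the single-tree bound I would induct on $t$. The base case $t = 1$ is the known squarefree Stanley depth estimate for edge ideals of trees, $\sdepth(S/I(T)) \geq \lceil(d+1)/3\rceil$, combined with the trivial bound $\sdepth(S/I(T)) \geq 1$. For the inductive step, fix a diameter-realizing path $x_1 - x_2 - \cdots - x_{d+1}$ in $T$. Then $x_1$ is a leaf, and by Lemma \ref{le2}, $(I^t : x_1 x_2) = I^{t-1}$. The short exact sequence
$$0 \longrightarrow (S/I^{t-1})(-2) \longrightarrow S/I^t \longrightarrow S/(I^t, x_1 x_2) \longrightarrow 0$$
combined with Lemma \ref{le9} gives $\sdepth(S/I^t) \geq \min\{\sdepth(S/I^{t-1}),\,\sdepth(S/(I^t, x_1 x_2))\}$. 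The first term is controlled by the inductive hypothesis. For the second, a further cascade of short exact sequences---obtained by quotienting successively by $x_1$, then $x_2$, then $x_3$, and applying Lemma \ref{morey} after each step to replace $I$ by the edge ideal of a subtree with suitably reduced diameter---reduces the problem to a tree of diameter at most $d-3$ on fewer variables, at which point Lemma \ref{le15} restores the lost variables and the inductive hypothesis on $t$ closes the loop.

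The hardest part will be the bookkeeping in this cascade: one has to verify that after deleting the three leftmost vertices of the longest path, the residual forest really does have its largest diameter drop by the expected amount, and that $\lceil(d-t+2)/3\rceil$ transforms correctly as $d$ and $t$ are simultaneously decremented. A secondary obstacle is the asymmetry between Lemmas \ref{ddisjoint} and \ref{sdisjoint}: the depth splitting is an equality, whereas the Stanley depth version is only an inequality, so the component reduction must be staged so that the intermediate sdepth estimates are only combined via the ``$\geq$'' direction and no attempted matching upper bound is needed. Once these inductive quantities are tracked, the proof of Theorem \ref{rth2} should follow the pattern of Theorem \ref{rth1} line-for-line.
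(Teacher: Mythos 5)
This theorem is not proved in the paper at all: it is imported verbatim from Pournaki, Seyed Fakhari and Yassemi \cite{FA}, whose argument does follow Morey's proof of Theorem \ref{rth1} with Lemma \ref{le9} replacing the Depth Lemma. So your skeleton (induct on $t$, exploit $(I^t:x_ix_j)=I^{t-1}$ from Lemma \ref{le2} for a leaf and its neighbour, chain short exact sequences through Lemma \ref{le9}) is the right one, and your observation that only part (1) of the Depth Lemma survives to the Stanley depth setting --- so every module to be bounded must sit as the \emph{middle} term of its sequence --- is a genuine and necessary precaution. Nevertheless, as written the proposal has two real gaps.

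The first is the component reduction. You propose to peel off $G_2,\dots,G_s$ one at a time using Lemma \ref{sdisjoint} and ``a careful expansion of $(I_1+\cdots+I_s)^t$.'' For $t\geq 2$ one has $(I_1+\cdots+I_s)^t=\sum_{a_1+\cdots+a_s=t}I_1^{a_1}\cdots I_s^{a_s}$, which is \emph{not} of the form $JS+J'S$ with $J,J'$ in disjoint variable sets, so Lemma \ref{sdisjoint} simply does not apply to $S/I^t$, and none of the lemmas recalled in this paper bounds the Stanley depth of a power of a sum of ideals in disjoint variables. In the actual arguments (Morey's and \cite{FA}'s) the forest is never reduced to a single tree in advance: the whole forest is carried through the induction, and the summand $s-1$ emerges only at the base cases, where the ideal genuinely splits (e.g.\ at $t=1$, or after the colon operations have reduced a power). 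Your reduction step therefore needs a different justification or must be abandoned in favour of running the induction on the forest itself.

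The second gap is that the single-tree cascade is asserted rather than carried out. Your first sequence leaves the term $S/(I^t,x_1x_2)$, which is not a power of an edge ideal of a smaller graph and is not directly handled by Lemma \ref{morey} or the analogue of Lemma \ref{le6}; the standard move is to mod out by a single vertex (the leaf's neighbour), and then to control the colon module by successively coloning with the adjacent \emph{leaves}, since Lemma \ref{le2} requires a leaf--neighbour pair and $(I^t:v)$ for an interior vertex $v$ is not a power of $I$. More importantly, the diameter bookkeeping you defer as ``the hardest part'' --- verifying that each three-step peel of the longest path lowers $d$ compatibly with $\lceil(d-t+2)/3\rceil$ while $t$ drops by one along the colon branch --- is precisely the content of the theorem; deferring it leaves nothing proved. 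Finally, the base case $\sdepth(S/I(T))\geq\lceil(d+1)/3\rceil$ for $t=1$ cannot be waved through as ``known'': it does not follow from $\sdepth\geq\depth$ (that is Stanley's conjecture, false in general by \cite{D}), and in \cite{FA} it is itself part of what must be established.
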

Let $v$ be a vertex of $G$, $v$ is called a \emph{near leaf} of $G$ if $v$ is not a leaf and $N(v)$ contains at most one vertex that is not a leaf. Let $a$ denote the number of near leaves of $G$. The bounds for depth and Stanley depth are strengthened by the following results in the same papers.
\begin{Corollary} [{\cite[Corollary 3.7]{SM}}]  \label{rth111}
	Let $G$ be a forest having $s$ number of connected components $G_1$, $G_2$ , \dots , $G_s$. Let $I=I(G)$ and $d_j$ be the diameter of $G_j$ and suppose $d=\max\limits_j \{d_j\} $, and let $a$ be the number of near leaves of a component of diameter $d$. Then for $t \geq 1$ $$\depth(S/I^t) \geq \max \big \{\big \lceil \frac{d-t+a}{3} \big \rceil +s-1 , s \big \}.$$
\end{Corollary}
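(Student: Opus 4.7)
The plan is to induct on $t$ and on the diameter $d$, refining the argument of Theorem \ref{rth1}. The bound $\depth(S/I^t) \geq s$ is immediate from Lemma \ref{le7} applied componentwise together with Lemma \ref{ddisjoint}, since the edge ideals of distinct components have disjoint variable supports. Hence it suffices to establish
$$\depth(S/I^t) \geq \Big\lceil \frac{d-t+a}{3} \Big\rceil + s - 1.$$

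Let $G_1$ be a component of diameter $d$ containing the $a$ near leaves in question, and fix a longest path $x_0, x_1, \dots, x_d$ in $G_1$; here $x_0, x_d$ are leaves while $x_1, x_{d-1}$ are always near leaves, accounting for the universal lower bound $a \geq 2$ when $d \geq 3$. Form the short exact sequence
$$0 \longrightarrow S/(I^t : x_0 x_1) \xrightarrow{\,\cdot\, x_0 x_1\,} S/I^t \longrightarrow S/(I^t, x_0 x_1) \longrightarrow 0.$$
By Lemma \ref{le2}, $(I^t : x_0 x_1) = I^{t-1}$, and the induction hypothesis on $t$ applied to the same forest $G$ yields $\depth(S/(I^t : x_0 x_1)) \geq \lceil (d-t+1+a)/3 \rceil + s - 1$, which is at least the target bound. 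By the Depth Lemma it therefore remains to prove $\depth(S/(I^t, x_0 x_1)) \geq \lceil (d-t+a)/3 \rceil + s - 1$.

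To analyze this quotient, further split it using a second short exact sequence involving $x_1$ and apply Lemma \ref{morey} with $y = x_1$: setting $x_1 = 0$ turns $I$ into the edge ideal $J$ of the forest $G \setminus \{x_1\}$, in which $x_0$ together with every sibling leaf originally attached to $x_1$ becomes an isolated vertex. By Lemma \ref{le15} each such isolated variable adds $+1$ to the depth of the associated quotient, producing a depth boost equal to the number of sibling leaves. Iterating this device along $x_0, x_1, x_2, x_3$ (three steps) lowers $d$ by $3$ and $t$ by $1$, matching the core reduction in the proof of Theorem \ref{rth1}; however, every near leaf of $G_1$ consumed during these steps contributes an extra $+1$ through the isolation of its leaves. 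Summing over the $a$ near leaves of $G_1$ replaces the constant $2$ of Theorem \ref{rth1} by $a$, yielding the desired bound.

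The main obstacle is the combinatorial bookkeeping of these near-leaf contributions across the induction: one must enlarge the class over which the hypothesis runs to include disjoint unions of forests with isolated stars, and verify that each near leaf of $G_1$ is counted exactly once without overlap with the $+s-1$ term already produced by Lemma \ref{ddisjoint}. Proposition \ref{2} supplies the depth of each isolated star that splits off, Lemma \ref{ddisjoint} handles the disjoint decomposition of the remaining forest, and Lemma \ref{le15} absorbs the free variables produced by isolated leaves, so the induction closes.
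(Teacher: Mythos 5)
This statement is quoted by the paper from Morey \cite{SM}; the paper itself contains no proof to compare against, so your proposal has to stand on its own. The parts you do carry out are fine: the bound $\depth(S/I^t)\geq s$ does follow from Lemma \ref{le7} together with Lemma \ref{ddisjoint}, the observation that $x_1$ and $x_{d-1}$ are near leaves of a longest path is correct, and the colon step $(I^t:x_0x_1)=I^{t-1}$ with the induction on $t$ correctly disposes of the kernel term of your first exact sequence. But two essential pieces are missing. First, the base case $t=1$ is never addressed: Lemma \ref{le2} requires $t\geq 2$, so the inequality $\depth(S/I)\geq \lceil (d-1+a)/3\rceil+s-1$ needs its own argument, and without it the induction on $t$ has nothing to stand on.

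Second, and more seriously, the improvement from the constant $2$ of Theorem \ref{rth1} to the parameter $a$ --- which is the entire content of this corollary --- is asserted rather than proved. Your reduction only ever walks along the diameter path $x_0,x_1,x_2,x_3,\dots$, and that path meets at most the two near leaves $x_1$ and $x_{d-1}$; the remaining $a-2$ near leaves sit on branches off the spine and are never touched by your iteration. Isolating the sibling leaves of $x_1$ via Lemma \ref{morey} does add free variables to one particular quotient module (a one-time additive bonus for that term), but this does not convert into a $+1$ inside the ceiling for each near leaf, and the sentence ``summing over the $a$ near leaves of $G_1$ replaces the constant $2$ by $a$'' is exactly the statement to be proved. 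A workable mechanism would be to colon by $vw$ for each off-spine near leaf $v$ with attached leaf $w$, which by Lemma \ref{le2} lowers $t$ by one without shrinking $d$, so that $a-2$ extra near leaves buy $a-2$ free decrements of $t$; but this requires an induction that tracks $d$, $t$, and the number of surviving near leaves simultaneously, and controls the cokernel $S/(I^t,v)$ at each such step. You yourself flag this bookkeeping as ``the main obstacle,'' which is an accurate self-assessment: the hard part of the proof has been described, not done.
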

\begin{Corollary} [{\cite[Corollary 3.2]{FA}}]  \label{rth222}
	Let $G$ be a forest having $s$ number of connected components $G_1$, $G_2$ , \dots , $G_s$. Let $I=I(G)$ and $d_j$ be the diameter of $G_j$ and suppose $d=\max\limits_j \{d_j\} $, and let $a$ be the number of near leaves of a component of diameter $d$. Then for $t \geq 1$ $$\sdepth(S/I^t) \geq \max \big \{\big \lceil \frac{d-t+a}{3} \big \rceil +s-1 , s \big \}.$$
\end{Corollary}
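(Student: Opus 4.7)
The plan is to mirror Morey's proof of Corollary \ref{rth111} in the Stanley depth setting, exploiting the fact that every ingredient used there has a Stanley depth counterpart available to us: Lemma \ref{le9} stands in for the Depth Lemma, Lemma \ref{sdisjoint} for Lemma \ref{ddisjoint}, Lemma \ref{le15} already works uniformly for both notions, and Theorem \ref{le12} combined with Lemma \ref{le7} supplies the baseline $\sdepth(S/I(T)^t)\geq 1$ for any tree $T$, which is what drives the bound $s$ for a forest with $s$ components. Thus the bound $\sdepth(S/I^t)\geq s$ is immediate from iterated application of these lemmas to the disjoint-variable decomposition $I = I(G_1)S+\cdots+I(G_s)S$, and the remaining task is the bound $\lceil(d-t+a)/3\rceil+s-1$.

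For the remaining bound I would induct on $t$ and on the number of edges, keeping the forest structure (and the parameter $s$) intact throughout, and concentrating on the component $G_{j_0}$ of diameter $d$ with $a$ near leaves. Base case: once $G_{j_0}$ has been reduced, through leaf-stripping, to a star (or a trivial graph), Proposition \ref{2} supplies the bound for that component, while the remaining $s-1$ components each contribute at least $1$ via Lemma \ref{sdisjoint} combined with the Lemma \ref{le7}/Theorem \ref{le12} trick noted above; the $s-1$ summand is therefore present from the outset and is preserved by every reduction step.

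For the inductive step, fix a diameter path $y_0y_1\cdots y_d$ in $G_{j_0}$ with $y_0$ a leaf, and form the short exact sequence
$$0\longrightarrow S/(I^t:y_0y_1)\longrightarrow S/I^t\longrightarrow S/(I^t,y_0y_1)\longrightarrow 0,$$
to which Lemma \ref{le9} applies. By Lemma \ref{le2} the kernel is $S/I^{t-1}$, which the induction on $t$ bounds by $\lceil(d-(t-1)+a)/3\rceil+s-1\geq\lceil(d-t+a)/3\rceil+s-1$. For the cokernel, I would invoke Lemma \ref{morey} to rewrite $(I^t,y_0y_1)$ modulo $y_0$ (and, iteratively, modulo the other leaves at $y_1$) as a power of the edge ideal of a smaller tree $T'$ obtained from $G_{j_0}$ by pruning those leaves; the variables so freed each contribute $+1$ through Lemma \ref{le15}, and the induction on the edge count then bounds the cokernel.

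The main obstacle is the delicate combinatorial accounting around near leaves. When the leaves adjacent to $y_1$ are stripped off, the diameter of $T'$ drops by at most $2$, and any apparent deficit in the bound has to be paid for either by a newly created near leaf at $y_2$ (raising $a'$) or by $y_1$ becoming an isolated vertex (supplying a $+1$ through Lemma \ref{le15}); verifying case-by-case that $(d'-t+a')+(\text{isolated-variable bonus})\geq d-t+a$ closes the induction. This is precisely the accounting that Morey carries out in \cite{SM} for the depth version; since every module-theoretic step of that argument has a Stanley depth analogue among the lemmas above, the same case analysis transfers verbatim and yields the stated Stanley depth bound.
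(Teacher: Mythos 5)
This statement is not proved in the paper at all: it is imported as Corollary~3.2 of \cite{FA}, so there is no in-paper argument to compare yours against. Your overall plan does match the strategy of that cited source, which is indeed to rerun Morey's induction from \cite{SM} with Lemma~\ref{le9} in place of part~(1) of the Depth Lemma, Lemma~\ref{sdisjoint} in place of Lemma~\ref{ddisjoint}, and the Lemma~\ref{le7}/Theorem~\ref{le12} contrapositive to secure $\sdepth\geq 1$ on each bipartite component. So the skeleton is the right one.

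As written, however, the proposal has two genuine gaps. First, your single short exact sequence built on the edge monomial $y_0y_1$ produces the cokernel $S/(I^t,y_0y_1)$, and none of the quoted lemmas handle an ideal obtained by adjoining a degree-two monomial: Lemma~\ref{morey} and the Lemma~\ref{le6}-type identifications only apply after adjoining a \emph{variable}. The working pattern --- used in \cite{SM} and throughout this paper's own Theorems~\ref{th2} and~\ref{thss} --- is a two-step splitting: first colon/sum by the vertex $y_1$ (so that $(I^t,y_1)$ becomes a power of the edge ideal of a minor, by the analogue of Lemma~\ref{le6}), and only then colon $S/(I^t:y_1)$ by the leaf $y_0$ to reach $I^{t-1}$ via Lemma~\ref{le2}, with Lemma~\ref{morey} handling the last cokernel. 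Second, the entire content of this corollary beyond the plain diameter bound is the near-leaf accounting (gaining $a$ rather than $2$ in the numerator), and you defer exactly that step with ``the same case analysis transfers verbatim.'' That transfer is not automatic: only the first inequality of the Depth Lemma has a known Stanley depth analogue (Lemma~\ref{le9}), so one must verify that every use of the Depth Lemma in Morey's accounting bounds the \emph{middle} term of a short exact sequence from the two outer ones, never the reverse. This is checkable and is what \cite{FA} actually does, but it is the step that needs to be written out; asserting it is where your proof stops being a proof.
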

If $T$ is a tree, then the following corollary is an immediate consequence of the Corollary \ref{rth111} and \ref{rth222}.
\begin{Corollary}\label{dsc}
Let $T$ be a tree and $d$ be the diameter of $T$ and let $a$ be the number of near leaves of $T$. If $I=I(T)$, then for $t \geq 1$ $$\depth(S/I^t),\sdepth(S/I^t) \geq \max\big\{\lceil \frac{d-t+a}{3} \big \rceil,1\big\} .$$
\end{Corollary}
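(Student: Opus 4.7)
The plan is to obtain this corollary as a direct specialization of Corollary \ref{rth111} and Corollary \ref{rth222} to the case where the forest has a single connected component. A tree $T$ is a forest with $s=1$, so I would set $G=G_1=T$, $d_1=d$, and take $a$ to be the number of near leaves of $T$ itself. Since there is only one component and it trivially realizes the maximum diameter, the hypotheses ``$d=\max_j\{d_j\}$'' and ``$a$ is the number of near leaves of a component of diameter $d$'' from the two cited corollaries reduce exactly to the tree-level definitions of $d$ and $a$ in the statement.

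Next I would substitute $s=1$ into the displayed bound $\max\{\lceil (d-t+a)/3\rceil+s-1,\,s\}$ coming from Corollary \ref{rth111} and Corollary \ref{rth222}. The summand $s-1$ vanishes and the second entry of the maximum becomes $1$, so the bound collapses to $\max\{\lceil (d-t+a)/3\rceil,\,1\}$, which is precisely the claimed lower bound for both $\depth(S/I^t)$ and $\sdepth(S/I^t)$. There is no genuine obstacle here: apart from the trivial bookkeeping that $d$ and $a$ carry the same meaning in the forest and tree formulations, the statement is obtained by pure substitution into the two previously established forest bounds, which is why the corollary is flagged as ``immediate'' in the text.
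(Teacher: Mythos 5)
Your proposal is correct and matches the paper's intent exactly: the paper itself presents Corollary \ref{dsc} as an immediate consequence of Corollaries \ref{rth111} and \ref{rth222}, obtained precisely by the specialization $s=1$ that you carry out. Nothing further is needed.
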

The bound in Corollary \ref{dsc} depends on the diameter of  $T$ and the number of near leaves in $T$.
If $I$ is the edge ideal of $P_{n,k}$ or $\mathcal{S}_{r,p}$, we give lower bounds for depth and Stanley depth of $S/I^t$ as Corollary \ref{cor3} and Corollary \ref{cor4}. We observe that our bounds are much sharper than the bounds given in Corollary \ref{dsc}.

	\section{Powers of Edge Ideal of a subclass of Caterpillar tree}
\noindent Let $n,k\geq 2$ and $l\in [k]$. We define $A_i:=\{y_{1i},y_{2i},\dots,y_{(k-1)i}\}$, for $1\leq i\leq n-1$, and $A_n:=\{y_{1n},y_{2n},\dots,y_{(l-1)n}\}$, where $A_n=\emptyset$ if $l=1$. Let $\bar{A_i}:=\{u_i\}\cup A_i$, $\bar{A}_n:=\{u_n\}\cup A_n$ and $A:=\bar{A}_1\cup \bar{A}_2\cup\dots\cup \bar{A}_n$. Let $S$ be the polynomial ring over a field $K$ in variables of set $A$ that is  $S:=K[A]$. Let $I=I(P_{n,k})$, in this section we give lower bounds for depth and Stanley depth of $S/I^t$ for $t\geq 1$. We denote by $G(I)$, the minimal set of monomial generators of the monomial ideal $I$. If $l\geq 2$, then
\begin{figure}[h!]
		\centering		\includegraphics[width=10cm]{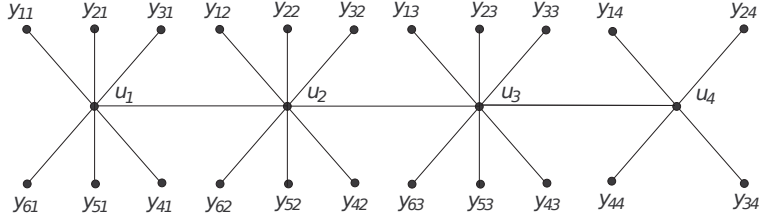}
		\caption{Graph $P_{4,7,5}$ with labelled vertices.}
	\end{figure}

$$G(I(P_{n,k,l}))= \bigcup_{i=1}^{n-1}\big\{u_iu_{i+1},u_i y_{1i},u_i y_{2i},\dots,u_iy_{(k-1)i}\big\}\cup \big\{u_n y_{1n},u_n y_{2n},\dots,u_ny_{(l-1)n}\big\}.$$ 
 If $l=1$, then
$$G(I(P_{n,k,1}))= \bigcup_{i=1}^{n-1}\big\{u_iu_{i+1},u_i y_{1i},u_i y_{2i},\dots,u_iy_{(k-1)i}\big\}.$$ Note that $P_{n,k,k}=P_{n,k}$. Also for $1\leq j\leq n-1$, we have $$I(P_{j,k}):=I(P_{n,k,l})\cap K[\bar{A}_1\cup  \dots\cup\bar{A}_{j-1} \cup \bar{A}_{j}].$$

\begin{Lemma}\label{le6}
If $I=I(P_{n,k,l})$ then for $t\geq 1,$ $ (I^t , u_n)= (I^t(P_{n-1,k}),u_n).$
\end{Lemma}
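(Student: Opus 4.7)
The plan is a direct comparison of the monomial generators on both sides, separating them according to whether they involve $u_n$. From the explicit description of $G(I(P_{n,k,l}))$ given just above the lemma, I would first partition the generators of $I$: those containing $u_n$ are precisely the path edge $u_{n-1}u_n$ together with the leaf edges $u_n y_{1n}, \dots, u_n y_{(l-1)n}$ (the latter set being empty when $l = 1$), while those not containing $u_n$ are the path edges $u_i u_{i+1}$ with $1 \leq i \leq n-2$ together with all leaf edges $u_i y_{ji}$ with $1 \leq i \leq n-1$ and $1 \leq j \leq k-1$. The key observation is that this second collection is exactly $G(I(P_{n-1,k}))$, because in $P_{n,k,l}$ each of the first $n-1$ path vertices $u_i$ carries the full complement of $k-1$ leaves (the truncation from $k$ to $l$ only affects the terminal vertex $u_n$).

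For the inclusion $(I^t, u_n) \subseteq (I^t(P_{n-1,k}), u_n)$, I would take an arbitrary monomial generator of $I^t$, written as a product $f_1 f_2 \cdots f_t$ of (not necessarily distinct) elements of $G(I)$. If at least one factor $f_i$ involves $u_n$, the product is divisible by $u_n$ and so lies in $(u_n)$. Otherwise every $f_i$ belongs to $G(I(P_{n-1,k}))$ by the partition above, and hence the product lies in $I^t(P_{n-1,k})$. In either case the generator lies in $(I^t(P_{n-1,k}), u_n)$. The reverse inclusion is immediate: the containment $G(I(P_{n-1,k})) \subseteq G(I)$ gives $I(P_{n-1,k}) \subseteq I$, whence $I^t(P_{n-1,k}) \subseteq I^t$, and $u_n$ is obviously in $(I^t, u_n)$.

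There is no serious obstacle here; the entire content of the lemma is the observation that $I(P_{n-1,k})$ is precisely the subideal of $I(P_{n,k,l})$ generated by those generators that do not involve $u_n$. The role of the lemma in the paper is presumably to give a clean description of $S/(I^t, u_n)$ so that the Depth Lemma and Lemma \ref{le15} can be applied in an induction on $n$, reducing the problem for $P_{n,k,l}$ to the corresponding problem for $P_{n-1,k}$.
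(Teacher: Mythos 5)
Your proposal is correct and is essentially the paper's own argument: the paper also notes that the inclusion $(I^t(P_{n-1,k}),u_n)\subseteq (I^t,u_n)$ is clear and that any monomial of $I^t$ not divisible by $u_n$ must already lie in $I^t(P_{n-1,k})$, which is exactly your partition of $G(I)$ according to divisibility by $u_n$. You simply spell out in more detail the factorization $f_1\cdots f_t$ that the paper leaves implicit.
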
	
\begin{proof}
The inclusion $(I^t(P_{n-1,k}),u_n)\subseteq (I^t,u_n)$ is clear. Conversely, if $u\in I^t$ is a monomial which
is not divisible by $u_n$, then, by the definition of $G(I)$, it follows that $u\in I^t(P_{n-1,k})$.

\end{proof}
\begin{Remark}
{\em Let $t\geq 1$. From Proposition \ref{2} it follows that $\depth(S/I(P_{1,k}))=\sdepth(S/I(P_{1,k}))=1$ and $\depth(S/I^t(P_{1,k})),\sdepth(S/I^t(P_{1,k}))\geq 1$.}
\end{Remark}
\begin{Lemma}\label{le8}
Let $k\geq 2$, $l\in [k]$ and $I=I(P_{2,k,l})$. We have that $$\depth(S/I) \, , \,\sdepth(S/I)\geq l.$$
\end{Lemma}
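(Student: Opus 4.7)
My plan is to peel off the vertex $u_1$ via the short exact sequence
$$0 \longrightarrow S/(I:u_1) \xrightarrow{\;\cdot u_1\;} S/I \longrightarrow S/(I,u_1) \longrightarrow 0$$
and then apply the Depth Lemma (part 1) together with Lemma \ref{le9} to bound $\depth(S/I)$ and $\sdepth(S/I)$ from below by the minimum of the corresponding invariants of the two flanking modules.

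First I would compute $(I:u_1)$. Dividing each generator of $I$ by its $\gcd$ with $u_1$ produces $u_2$ (from $u_1u_2$), the variables $y_{i1}$ (from $u_1y_{i1}$), and the generators $u_2y_{j2}$ untouched; the latter are absorbed into $(u_2)$, so
$$(I : u_1) \;=\; (u_2,\, y_{11},\, y_{21},\, \ldots,\, y_{(k-1)1}).$$
Hence $S/(I:u_1) \cong K[u_1, y_{12}, \ldots, y_{(l-1)2}]$ is a polynomial ring in $l$ variables, so $\depth(S/(I:u_1)) = \sdepth(S/(I:u_1)) = l$.

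Next I would analyse $S/(I, u_1)$. Here $(I, u_1) = (u_1,\, u_2 y_{12},\, \ldots,\, u_2 y_{(l-1)2})$ (with the second batch empty when $l = 1$). Setting $S_1 := K[u_2, y_{12}, \ldots, y_{(l-1)2}]$, the ideal $J := (u_2 y_{12}, \ldots, u_2 y_{(l-1)2}) \subset S_1$ is precisely the edge ideal of the star $\mathcal{S}_{l-1}$ centered at $u_2$, so Proposition \ref{2} gives $\depth_{S_1}(S_1/J) = \sdepth_{S_1}(S_1/J) = 1$. The boundary case $l = 1$ degenerates harmlessly to $S_1 = K[u_2]$, $J = 0$, which again has depth and Stanley depth $1$. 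Since the variables $y_{11}, \ldots, y_{(k-1)1}$ are adjoined freely, $k-1$ successive applications of Lemma \ref{le15} yield
$$\depth(S/(I, u_1)) \;=\; \sdepth(S/(I, u_1)) \;=\; 1 + (k-1) \;=\; k.$$

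Feeding both estimates into the short exact sequence then produces
$$\depth(S/I),\;\sdepth(S/I) \;\geq\; \min\{l, k\} \;=\; l,$$
where the last equality uses the hypothesis $l \in [k]$. There is no real obstacle in this argument; the only delicate point is the bookkeeping in the boundary case $l = 1$ (in which $A_n = \emptyset$ and no $u_2 y_{j2}$-generators exist), but the same chain of identifications still produces the required bound. An alternative proof could instead split off $u_2$, but splitting off $u_1$ is cleaner because it directly exposes the star $\mathcal{S}_{l-1}$ whose depth is controlled by Proposition \ref{2}.
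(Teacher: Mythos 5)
Your proof is correct and follows essentially the same strategy as the paper's: a single short exact sequence splitting off one endpoint of the path, with the colon module a polynomial ring and the quotient module a star plus free variables, finished by the Depth Lemma and Lemma~\ref{le9}. The only difference is that you split off $u_1$ while the paper splits off $u_2$, which simply swaps which flanking module contributes $l$ and which contributes $k$; the bound $\min\{k,l\}=l$ comes out the same either way, and your handling of the degenerate case $l=1$ is fine.
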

\begin{proof}
	Clearly $u_1$ and $u_2$ have $k-1$ and $l-1$ pendant vertices respectively. Consider the short exact sequence:
	$$0\longrightarrow\ S/(I:u_2) \xrightarrow{\,\cdot u_2\,}\ S/I
	\longrightarrow\ S/(I,u_2) \longrightarrow\ 0.$$
	Now $(I:u_2) = (x:x\in N(u_2))$ and $S/(I:u_2)\cong K[A_1\cup \{u_2\}]$, thus $\depth(S/(I:u_2))=k$.
	$S/(I,u_2)\cong (K[\bar{A}_1]/I(P_{1,k}))[A_2]$. Thus by Lemma \ref{le15} and Proposition \ref{2} $\depth(S/(I,u_2))=1+l-1=l$.
	By Depth Lemma,
	$\depth(S/I)\geq \min\{\depth(S/(I:u_2)),
	\depth(S/(I,u_2))\} \geq l.$
	Proof for Stanley depth is similar using Lemma \ref{le9}.
\end{proof}
\begin{Proposition} \label{th13}
	Let $n,k\geq 2$, $l\in [k]$ and $I=I(P_{n,k,l})$. We have that
	$$\depth(S/I),\sdepth(S/I) \geq \left\{\begin{array}{ll}	\Big(\frac{n-2}{2}\Big)k+l\,, & \text{if\,\, $n$ is even;} \\
	\Big(\frac{n-1}{2}\Big)k+ 1\,, & \text{if\,\, $l\geq2$ and $n$ is odd;}\\
	\Big(\frac{n-1}{2}\Big)k\,, & \text{if\,\, $l=1$ and $n$ is odd.}
	\end{array}\right.$$
\end{Proposition}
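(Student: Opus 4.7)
The plan is to induct on $n$. The base case $n=2$ is exactly Lemma~\ref{le8}, which gives $l$ in agreement with the formula for even $n$ (namely $\frac{0}{2}k+l=l$). For the inductive step we will also need the values of the bound for the full caterpillars $P_{n-1,k}$ and $P_{n-2,k}$, whose initialization is supplied by Proposition~\ref{2} for $P_{1,k}$ (the $(k-1)$-star has depth and Stanley depth $1$, matching the odd case with $l=k$) and by Lemma~\ref{le8} for $P_{2,k}=P_{2,k,k}$.

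For $n\geq 3$, the plan is to use the short exact sequence
$$0\longrightarrow S/(I:u_n)\xrightarrow{\,\cdot u_n\,} S/I\longrightarrow S/(I,u_n)\longrightarrow 0.$$
By Lemma~\ref{le6}, $(I,u_n)=(I(P_{n-1,k}),u_n)$, and the variables in $A_n$ are free on $S/(I,u_n)$, so
$$S/(I,u_n)\cong\bigl(K[\bar{A}_1\cup\dots\cup\bar{A}_{n-1}]/I(P_{n-1,k})\bigr)[A_n],$$
whose depth and Stanley depth are, by iterated application of Lemma~\ref{le15} and the induction hypothesis on $P_{n-1,k}$, at least (the bound for $P_{n-1,k}$)$\,+\,(l-1)$. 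For the colon side, since $N(u_n)=\{u_{n-1}\}\cup\{y_{1n},\dots,y_{(l-1)n}\}$ and $u_{n-1}$ absorbs every generator in which it appears, one gets
$$(I:u_n)=\bigl(u_{n-1},\,y_{1n},\dots,y_{(l-1)n}\bigr)+I(P_{n-2,k}),$$
with $I(P_{n-2,k})$ living in the variables $\bar{A}_1\cup\dots\cup\bar{A}_{n-2}$. Hence $S/(I:u_n)$ is the polynomial extension of $K[\bar{A}_1\cup\dots\cup\bar{A}_{n-2}]/I(P_{n-2,k})$ by the $k$ free variables $A_{n-1}\cup\{u_n\}$, and a further application of Lemma~\ref{le15} together with induction gives a lower bound of (the bound for $P_{n-2,k}$)$\,+\,k$.

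Combining the two estimates through the Depth Lemma, and its Stanley depth analogue Lemma~\ref{le9}, yields
$$\depth(S/I),\ \sdepth(S/I)\geq\min\bigl\{(\text{bound for }P_{n-2,k})+k,\ (\text{bound for }P_{n-1,k})+(l-1)\bigr\}.$$
The proof is then finished by a case analysis on the parity of $n$ (and, when $n$ is odd, on whether $l=1$ or $l\geq 2$, since $l=1$ forces $A_n=\emptyset$ and removes the $(l-1)$ summand). Substituting the inductive values, in the even case the inequality $l\leq k$ identifies the smaller of the two candidates as $\frac{n-2}{2}k+l$; in the odd, $l\geq 2$ case both candidates equal $\frac{n-1}{2}k+1$ after using $l-1\geq 1$; and in the odd, $l=1$ case the term $\frac{n-1}{2}k$ wins. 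The main bookkeeping obstacle is precisely this parity/$l=1$ casework, since the shapes of the two summands in the minimum flip roles depending on the parity of $n$; once separated, the arithmetic is routine.
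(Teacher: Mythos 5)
Your proposal is correct and follows essentially the same route as the paper: the short exact sequence $0\to S/(I:u_n)\to S/I\to S/(I,u_n)\to 0$, the identifications $(I,u_n)=(I(P_{n-1,k}),u_n)$ and $(I:u_n)=(N(u_n))+I(P_{n-2,k})$ with the resulting free variables, Lemma~\ref{le15}, induction on $n$ seeded by Lemma~\ref{le8} and Proposition~\ref{2}, and the same parity/$l$ casework via the Depth Lemma and Lemma~\ref{le9}. The only cosmetic difference is that the paper treats $n=3$ as a separate explicit case before running the $n\geq 4$ induction, whereas you fold it into the general step by invoking Proposition~\ref{2} for $P_{1,k}$; the arithmetic in both treatments agrees.
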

\begin{proof}
For $n=2$, the conclusion follows from Lemma \ref{le8}. For $n=3$, we consider the following short exact sequence
$$0\longrightarrow\ S/(I:u_3) \xrightarrow{\,\cdot u_3\,}\ S/I
	\longrightarrow\ S/(I,u_3) \longrightarrow\ 0.$$
Now $$S/(I:u_3) \cong S/ ((y:y \in N(u_3))+I(P_{1,k}))\cong \big(K[\bar{A}_1]/I(P_{1,k})\big)[A_2\cup \{u_3\}].$$ From Lemma \ref{le15} and Proposition \ref{2} it follows that $$\depth S/(I:u_3)= k+\depth (K[\bar{A}_1]/I(P_{1,k}))=k+1.$$
Now, since $S/(I,u_3) \cong S/(I(P_{2,k}),u_3)\cong \big(K[\bar{A}_1\cup \bar{A}_2]/(I(P_{2,k})\big)[A_3]$, by Lemma \ref{le15} and Lemma \ref{le8} it follows that $$\depth (S/(I,u_3)) = l-1+\depth (K[\bar{A}_1\cup \bar{A}_2]/(I(P_{2,k})) \geq l-1+k=k+l-1.$$
	So by Depth Lemma
	$$\depth(S/I) \geq \left\{\begin{array}{ll}	k+1\,, & \text{if\,\, $l\geq 2$;} \\
	k\,, & \text{if\,\, $l=1$.}
	\end{array}\right.$$
Similarly, from Lemma \ref{le9} it follows that
$$\sdepth(S/I) \geq \left\{\begin{array}{ll}	k+1\,, & \text{if\,\, $l\geq 2$;} \\
	k\,, & \text{if\,\, $l=1$.}
	\end{array}\right.$$
For $n \geq 4$, we consider the following short exact sequence
$$0\longrightarrow\ S/(I:u_n) \xrightarrow{\,\cdot u_n\,}\ S/I
	\longrightarrow\ S/(I,u_n) \longrightarrow\ 0.$$
Notice that $$S/(I:u_n) = S/((y:y \in N(u_n))+I(P_{n-2,k}))\cong \big(K[A\backslash(\bar{A}_{n-1}\cup \bar{A_n})]/I(P_{n-2,k})\big)[A_{n-1}\cup u_n].$$
and $$S/(I,u_n)\cong \big(K[A\backslash\bar{A}_n]/I(P_{n-1,k})\big)[A_n].$$	
\textbf{Case 1:} $n$ is even. \\
	 For $k\geq 2$, since $P_{n-2,k}=P_{n-2,k,k}$ and $P_{n-1,k}=P_{n-1,k,k}$,  using induction on $n$ and Lemma \ref{le15}, it follows that:	
	\begin{multline*}
		\depth(S/(I:u_n)) =\depth\big(K[A\backslash(\bar{A}_{n-1}\cup \bar{A_n})]/I(P_{n-2,k})\big)+(k-1)+1\geq \\ \big( \big(\frac{n-2-2}{2} \big )k+k\big) + k= \big (\frac{n}{2} \big )k,
	\end{multline*}
and
\begin{multline*}	$$
		\depth(S/(I,u_n))=\depth \big(K[A\backslash\bar{A}_n]/I(P_{n-1,k})\big)+(l-1) \geq\\ \big(\big (\frac{n-2}{2} \big )k +1\big) + (l-1) = \big (\frac{n-2}{2} \big )k +l.
$$
\end{multline*}
	Thus by Depth Lemma,
	$$\depth(S/I) \geq \big (\frac{n-2}{2} \big )k +l.$$
	\textbf{Case 2:} When $n$ is odd.\\ Again by induction on $n$ and Lemma \ref{le15},
\begin{multline*}
		\depth(S/(I:u_n)) =\depth\big(K[A\backslash(\bar{A}_{n-1}\cup \bar{A_n})]/I(P_{n-2,k})\big)+(k-1)+1\geq \\ \big( \big(\frac{n-3}{2} \big )k+1\big) + k= \big (\frac{n-1}{2} \big )k+1,
	\end{multline*}	
and
\begin{multline*}	
$$\depth(S/(I,u_n))=\depth \big( K[A\backslash\bar{A}_n]/I(P_{n-1,k})\big)+(l-1) \geq\\ \big(\big( \frac{n-1-2}{2} \big )k +k\big)+ (l-1)= \big (\frac{n-1}{2} \big )k + (l-1).$$
\end{multline*}
Thus by Depth Lemma,
		$$\depth(S/I) \geq \left\{\begin{array}{ll}
	\big(\frac{n-1}{2}\big)k+ 1\,, & \text{if\,\, $l\geq2$;}\\
	\big(\frac{n-1}{2}\big)k\,, & \text{if\,\, $l=1$.}
	\end{array}\right.$$
	Proof for the Stanley depth is similar by using Lemma \ref{le9} instead of Depth Lemma.
\end{proof}
\begin{Corollary}\label{sq}
	If $n\geq2$, $k\geq 2$ and $I=I(P_{n,k})$ then
	$$\depth(S/I),\sdepth(S/I) \geq \left\{\begin{array}{ll}	\big(\frac{n}{2}\big)k\,, & \text{if\,\, $n$ is even;} \\
	\big(\frac{n-1}{2}\big)k+ 1\,, & \text{if\,\, $n$ is odd.}
	\end{array}\right.$$
\end{Corollary}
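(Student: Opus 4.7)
The plan is to obtain Corollary \ref{sq} as a direct specialization of Proposition \ref{th13}. Since $P_{n,k,k} = P_{n,k}$ by the definition following Definition \ref{def1}, the edge ideal $I(P_{n,k})$ equals $I(P_{n,k,l})$ with $l = k$. Because the hypothesis assumes $k \geq 2$, we automatically have $l = k \geq 2$, so only the first two branches of the piecewise bound in Proposition \ref{th13} are relevant; the branch $l = 1$ never arises.

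First I would handle the case when $n$ is even. Here Proposition \ref{th13} yields the lower bound $\frac{n-2}{2}k + l$ for both $\depth(S/I)$ and $\sdepth(S/I)$. Substituting $l = k$ and simplifying $\frac{n-2}{2}k + k = \frac{n}{2}k$ gives the claimed inequality. Next, for odd $n$, Proposition \ref{th13} in the branch $l \geq 2$ gives the bound $\frac{n-1}{2}k + 1$ directly, with no further arithmetic needed.

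There is essentially no obstacle: Corollary \ref{sq} is a routine arithmetic reformulation of Proposition \ref{th13} in the special case $l = k$, with all the real work (the inductive argument, the short exact sequences involving $(I:u_n)$ and $(I, u_n)$, and the applications of Lemmas \ref{le15} and \ref{le9}) already carried out inside the proof of Proposition \ref{th13}. The only point worth explicitly stating in the write-up is the identification $P_{n,k,k} = P_{n,k}$, which ensures that the corollary is a genuine specialization rather than a separate case.
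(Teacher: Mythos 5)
Your proposal is correct and matches the paper's (implicit) argument exactly: Corollary \ref{sq} is stated without a separate proof precisely because it is the specialization $l=k$ of Proposition \ref{th13}, and your arithmetic $\big(\frac{n-2}{2}\big)k+k=\big(\frac{n}{2}\big)k$ for even $n$ together with the direct reading of the $l\geq 2$ branch for odd $n$ is all that is needed.
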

\begin{Example}{\em
By using CoCoA (for sdepth we use SdepthLib.coc \cite{GR}) it has been noticed that the equality may hold in some cases. For instance,  $\depth(S/I(P_{4,4}))=\sdepth(S/I(P_{4,4}))=8=\big(\frac{4}{2}\big)4$, and $\depth(S/I(P_{5,3}))=\sdepth(S/I(P_{5,3}))=7=\big(\frac{5-1}{2}\big)3+1$.
}
\end{Example}
For convenience we label the vertices of $A_n=\{y_{1n},y_{2n},\dots,y_{(l-1)n}\}$ by $s_1,s_2,\dots,s_{l-1}$. Set $S_i:=K[A]/(s_1,s_2,\dots, s_i)$ and $I_i:=I\cap S_{i}.$
\begin{Theorem} \label{th2}
	Let $n,k\geq 2$, $l\in [k]$, $t\geq1,$ and $I=I(P_{n,k,l})$. We have that
	$$\depth(S/I^t),\sdepth(S/I^t) \geq \left\{\begin{array}{ll}\max\Big\{1,\Big(\frac{n-t-1}{2}\Big)k+l-1\Big\} \,, & \text{if\,\, $n$ and $t$ have opposite parity;} \\
	
	\max\Big\{1,\Big(\frac{n-t}{2}\Big)k\Big\} \,, & \text{if\,\, $n$ and $t$ have the same parity} \\
	                              &\text{and $2\leq l\leq k$};\\
	\max\Big\{1,\Big(\frac{n-t}{2}\Big)k-1\Big\} \,, & \text {if\,\, $n$ and $t$ have the same parity}\\
	                              &\text{and $l=1$.}
	\end{array}\right.$$
	
	\begin{proof}
		Since $P_{n,k,l}$ is a bipartite graph, from Lemma \ref{le7} it follows that $\depth(S/I^t)\geq 1$ for all $t\geq 1$. We use induction on $n$ and $t.$ For $n\geq 2$ and $t=1$, the result follows from Proposition \ref{th13}. For $n=2$ and $t\geq 1$, the result follows from Lemma \ref{le7}. Let $n=3$. For $t\geq 3$, the result again follows from Lemma \ref{le7}. If $t=2$ then we need to prove the desired inequality. Let $I=I(P_{3,k,l})$. We will prove that
$$\depth(S/I^2)  \geq \max\Big\{1,\Big(\frac{3-2-1}{2}\Big)k+l-1\Big\}=\max\{1,l-1\}.$$
If $l=1$, then $\max\{1,l-1\}=1$ and from Lemma \ref{le7} we have that $\depth(S/I^2) \geq 1.$ Assume that $l\geq 2$ and consider the following short exact sequence
$$0\longrightarrow\ S/(I^2:u_3) \xrightarrow{\,\cdot u_3\,}\ S/I^2
\longrightarrow\ S/(I^2,u_3) \longrightarrow\ 0.$$
By Lemma \ref{le6},
$S/(I^2,u_3) \cong S/(I^2(P_{2,k}), u_3)\cong (K[A \backslash (\bar{A}_3)] /I^2(P_{2,k}) )[A_3] .$	Therefore,
$$\depth(S/(I^2,u_3)) \geq 1 +(l-1) =l.$$
We consider the following family of short exact sequences:
	$$0\longrightarrow\
	S_0/(I_0^2:u_3s_1) \xrightarrow{\,\cdot s_1\,} S_0/(I_0^2:u_3) \longrightarrow\
	S_0/((I_0^2:u_3),s_1) \longrightarrow\ 0,$$
	$$0\longrightarrow\ S_1/(I_1^2:u_3s_2) \xrightarrow{\,\cdot s_2\,}\ S_1/(I_1^2:u_3) \longrightarrow\
	S_1/((I_1^2:u_3),s_2) \longrightarrow\ 0,$$
	$$\vdots$$
	$$0\longrightarrow\
	S_{l-2}/(I_{l-2}^2:u_3s_{l-1}) \xrightarrow{\,\cdot s_{l-1}\,}\ S_{l-2}/(I_{l-2}^2:u_3)
	\longrightarrow\ S_{l-2}/((I_{l-2}^2:u_3),s_{l-1}) \longrightarrow\ 0.$$
	By Lemma \ref{le2}, $\depth(S_i/(I_{i}^2:u_3s_{i+1}))=\depth(S_i/I_{i})$ and by Proposition \ref{th13}
	$$\depth(S_i/(I_{i}^2:u_3s_{i+1})) \geq k+1.$$
		Since $S_{l-2}/((I_{l-2}^2:u_3),s_{l-1})\cong S_{l-1}/(I_{l-1}^2:u_3)$, consider the following short exact sequence		
	$$0\longrightarrow\
	S_{l-1}/(I_{l-1}^2:u_3u_2) \xrightarrow{\,\cdot u_2\,}\ S_{l-1}/(I_{l-1}^2:u_3)
	\longrightarrow\ S_{l-1}/((I_{l-1}^2:u_3),u_2) \longrightarrow\ 0.$$
	By Lemma \ref{le2}, $\depth(S_{l-1}/(I_{l-1}^2:u_3u_2))=	\depth(S_{l-1}/I_{l-1})$, here $l=1$ and by Proposition \ref{th13}
	$$\depth(S_{l-1}/(I_{l-1}^2:u_3u_2))\geq
k.$$
	Clearly $S_{l-1}/((I_{l-1}^2:u_3),u_2)\cong \Big(K[A\backslash (\bar{A}_{2}\cup \bar{A_3})]/I^2 (P_{1,k}))\Big)[A_{2}\cup\{u_3\}]$,  therefore by Lemma \ref{le15} and Proposition \ref{2} we have
	$$\depth(S_{l-1}/((I_{l-1}^2:u_3),u_2))= \depth\Big(K[A\backslash (\bar{A}_{2}\cup \bar{A_3})]/I^2 (P_{1,k})\Big)+(k-1)+1 \geq k+1.$$  Depth Lemma implies,
	$$\depth(S/I^2) \geq  l.$$
Now let $n\geq 4$, $t\geq 2$ and $I=I(P_{n,k,l})$. We consider two cases:\\
		\textbf{Case 1:} When $n$ and $t$ have the same parity.\\
		\textbf{(a).} Let $l=1$. Consider the following short exact sequence
		$$0\longrightarrow\ S/(I^t:u_n) \xrightarrow{\,\cdot u_n\,}\ S/I^t
		\longrightarrow\ S/(I^t,u_n) \longrightarrow\ 0.$$
		By Lemma \ref{le6},
		$S/(I^t,u_n)=S/(u_n,I^t(P_{n-1,k})).$
		For $k\geq 2$, since $P_{n-1,k}=P_{n-1,k,k}$, $n-1$ and $t$ have the opposite parity, using induction on $n$, it follows that:
		$$\depth(S/(I^t,u_n)) \geq  \Big(\frac{n-1-t-1}{2}\Big)k+(k-1) =  \Big(\frac{n-t}{2}\Big)k-1.$$
		We consider another short exact sequence as follows
		$$0\longrightarrow\ S/(I^t:u_nu_{n-1}) \xrightarrow{\,\cdot u_{n-1}\,}\ S/(I^t:u_n)
		\longrightarrow\ S/((I^t:u_n),u_{n-1}) \longrightarrow\ 0.$$
		Since $u_{n-1}$ is the unique neighbor of $u_n$, from Lemma \ref{le2} it follows that $(I^t:u_nu_{n-1})=I^{t-1}$. Now $n$ and $t-1$ have the opposite parity thus by induction on $t$
		$$\depth(S/(I^t:u_nu_{n-1})) = \depth(S/I^{t-1})\geq \Big(\frac{n-(t-1)-1}{2}\Big)k+1-1=\Big(\frac{n-t}{2}\Big)k, $$
		and by Lemma \ref{morey} we have $$S/((I^t:u_n),u_{n-1})\cong \big(K[A\backslash (\bar{A}_{n-1}\cup \bar{A_n})]/I^t(P_{n-2,k})\big)[A_{n-1}\cup \{u_n\}].$$ By induction on $n$ and Lemma \ref{le15}
		$$\depth(S/((I^t:u_n),u_{n-1})\geq \Big(\big(\frac{n-t-2}{2}\big)k\Big)+k \geq \Big(\frac{n-t}{2}\Big)k.$$
		Thus by Depth Lemma we have,
		$$\depth (S/I^t) \geq  \Big(\frac{n-t}{2}\Big)k -1.$$
		\textbf{(b).} Let $l\geq 2$. Consider the following short exact sequence
		$$0\longrightarrow\ S/(I^t:u_n) \xrightarrow{\,\cdot u_n\,}\ S/I^t
		\longrightarrow\ S/(I^t,u_n) \longrightarrow\ 0.$$
		By Lemma \ref{le6},
		$S/(I^t,u_n)\cong\big(K[A\backslash \bar{A_n}]/(I^t(P_{n-1,k}))\big)[A_n].$ Therefore by Lemma \ref{le15} we have
		$$\depth(S/(I^t,u_n)) = \depth (K[A\backslash \bar{A_n}]/(I^t(P_{n-1,k}))+ |A_n|$$
		Here $n-1$ and $t$ have the opposite parity, so by induction on $n$,
	     $$\depth(S/(I^t,u_n)) \geq \Big(\big(\frac{n-t-2}{2}\big)k+k-1\Big)  +(l-1)
			\geq \Big(\frac{n-t}{2}\Big)k+l-2 .$$
Now we find lower bound for depth of module $S/(I^t:u_n)$. Let $0\leq i\leq l-2$. By Lemma \ref{morey}, $S_{i}/((I_{i}^t:u_n),s_{i+1})\cong S_{i+1}/(I_{i+1}^t:u_n)$ where $S_0=S$ and $I_0=I$.
  We consider the following family of short exact sequences:
		$$0\longrightarrow\
		S_0/(I_0^t:u_ns_1) \xrightarrow{\,\cdot s_1\,}\ S_0/(I_0^t:u_n) \longrightarrow\
		S_0/((I_0^t:u_n),s_1) \longrightarrow\ 0,$$
		$$0\longrightarrow\ S_1/(I_1^t:u_ns_2) \xrightarrow{\,\cdot s_2\,}\ S_1/(I_1^t:u_n) \longrightarrow\
		S_1/((I_1^t:u_n),s_2) \longrightarrow\ 0,$$
		$$\vdots$$
			$$0\longrightarrow\
		S_{l-2}/(I_{l-2}^t:u_ns_{l-1}) \xrightarrow{\,\cdot s_{l-1}\,}\ S_{l-2}/(I_{l-2}^t:u_n)
		\longrightarrow\ S_{l-2}/((I_{l-2}^t:u_n),s_{l-1}) \longrightarrow\ 0.$$
	    By Lemma \ref{le2}, $\depth(S_i/(I_{i}^t:u_ns_{i+1}))=\depth(S_i/I_{i}^{t-1}).$
Here $n$ and $t-1$ have the opposite parity so, by using induction on $t$,
		$$\depth(S_i/(I_{i}^t:u_ns_{i+1}))\geq
			\Big(\frac{n-(t-1)-1}{2}\Big)k+(l-1-i)
			\geq \Big(\frac{n-t}{2}\Big)k.$$
Again by Lemma \ref{morey}$, S_{l-2}/((I_{l-2}^t:u_n),s_{l-1})\cong S_{l-1}/(I_{l-1}^t:u_n)$. Now consider the following short exact sequence		
$$0\longrightarrow\
		S_{l-1}/(I_{l-1}^t:u_nu_{n-1}) \xrightarrow{\,\cdot u_{n-1}\,}\ S_{l-1}/(I_{l-1}^t:u_n)
		\longrightarrow\ S_{l-1}/((I_{l-1}^t:u_n),u_{n-1}) \longrightarrow\ 0,$$
by Lemma \ref{le2}, $\depth(S_{l-1}/(I_{l-1}^t:u_nu_{n-1}))=	\depth(S_{l-1}/I_{l-1}^{t-1})$. By using induction on $t$,
			$$\depth(S_{l-1}/(I_{l-1}^t:u_nu_{n-1}))\geq
			\Big(\frac{n-(t-1)-1}{2}\Big)k+\big(l-(l-1)\big)-1 = \Big(\frac{n-t}{2}\Big)k.$$
		Clearly $S_{l-1}/((I_{l-1}^t:u_n),u_{n-1})\cong S'/I_{l-1}^tS'$, where $S'=K[A\backslash(A_n\cup\{u_{n-1}\})]$. Thus $u_n$ and all variables in $A_{n-1}$ are regular on $S'/I_{l-1}^tS'$. Since $$S'/I_{l-1}^tS'\cong \Big(K[A\backslash (\bar{A}_{n-1}\cup \bar{A_n})]/I^t (P_{n-2,k}))\Big)[A_{n-1}\cup\{u_n\}],$$ therefore by Lemma \ref{le15}, we get
		$\depth(S'/I_{l-1}^tS')= \depth\Big(K[A\backslash (\bar{A}_{n-1}\cup \bar{A_n})]/I^t (P_{n-2,k})\Big)+(k-1)+1.$ Here $n-2$ and $t$ have the same parity, so by induction on $n$
	$$\depth(S'/I_{l-1}^tS') \geq \Big(\frac{n-2-t}{2}\Big)k+( k-1)+1= \Big(\frac{n-t}{2}\Big)k.$$
		Depth Lemma implies,
		$$\depth(S/I^t) \geq  \Big(\frac{n-t}{2}\Big)k.$$
\textbf{Case 2:} When $n$ and $t$ have the opposite parity.\\
		\textbf{(a).} Let $l=1$. For this consider the following short exact sequence:
		$$0\longrightarrow\ S/(I^t:u_n) \xrightarrow{\,\cdot u_n\,}\ S/I^t
		\longrightarrow\ S/(I^t,u_n) \longrightarrow\ 0.$$
		By Lemma \ref{le6},
		$S/(I^t,u_n)=S/(u_n,I^t(P_{n-1,k})).$
		Here $n-1$ and $t$ have the same parity, so by induction on $n$,
		$$\depth(S/(I^t,u_n)) \geq  \Big(\frac{n-t-1}{2}\Big)k.$$
		For the depth of module $S/(I^t:u_n)$, we consider another short exact sequence as follows:
		$$0\longrightarrow\ S/(I^t:u_nu_{n-1}) \xrightarrow{\,\cdot u_{n-1}\,}\ S/(I^t:u_n)
		\longrightarrow\ S/((I^t:u_n),u_{n-1}) \longrightarrow\ 0,$$
		since $u_{n-1}$ is the unique neighbor of $u_n$ thus by Lemma \ref{le2} we have $(I^t:u_nu_{n-1})=I^{t-1}$. Now $n$ and $t-1$ have the same parity thus by induction on $t$
\begin{multline*}		
$$\depth(S/(I^t:u_nu_{n-1})) = \depth(S/I^{t-1})\geq \Big(\frac{n-(t-1)}{2}\Big)k-1=\\\Big(\frac{n-t-1}{2}\Big)k+k-1>\Big(\frac{n-t-1}{2}\Big)k,$$
\end{multline*}	
and by Lemma \ref{morey} we have $$S/((I^t:u_n),u_{n-1})\cong \big(K[A\backslash (\bar{A}_{n-1}\cup \bar{A_n})]/I^t(P_{n-2,k})\big)[A_{n-1}\cup u_n],$$ by induction on $n$ and Lemma \ref{le15}
\begin{multline*}
\depth(S/((I^t:u_n),u_{n-1}))\geq \Big(\big(\frac{n-t-3}{2}\big)k+k-1\Big)+k=\\\big(\frac{n-t-1}{2}\big)k-1+k > \Big(\frac{n-t-1}{2}\Big)k.
\end{multline*}	
Thus by Depth Lemma we have,
		$$\depth S/I^t \geq  \Big(\frac{n-t-1}{2}\Big)k.$$
		\textbf{(b).} Let $l\geq 2$. Consider the short exact sequence
		$$0\longrightarrow\ S/(I^t:u_n) \xrightarrow{\,\cdot u_n\,}\ S/I^t
		\longrightarrow\ S/(I^t,u_n) \longrightarrow\ 0.$$
		By Lemma \ref{le6},
		$S/(I^t,u_n)\cong\big(K[A\backslash \bar{A_n}]/(I^t(P_{n-1,k}))\big)[A_n].$
		$$\depth(S/(I^t,u_n) = \depth \Big(K[A\backslash \bar{A_n}]/(I^t(P_{n-1,k}))\Big)+ |A_n|.$$
		Here $n-1$ and $t$ have the same parity, so by induction on $n$,
	     $$\depth(S/(I^t,u_n)) \geq \big(\frac{n-t-1}{2}\big)k+l-1.$$
Consider again the following family of short exact sequences:
		$$0\longrightarrow\
		S_0/(I_0^t:u_ns_1) \xrightarrow{\,\cdot s_1\,}\ S_0/(I_0^t:u_n) \longrightarrow\
		S_0/((I_0^t:u_n),s_1) \longrightarrow\ 0,$$
		$$0\longrightarrow\ S_1/(I_1^t:u_ns_2) \xrightarrow{\,\cdot s_2\,}\ S_1/(I_1^t:u_n) \longrightarrow\
		S_1/((I_1^t:u_n),s_2) \longrightarrow\ 0,$$
		$$\vdots$$
			$$0\longrightarrow\
		S_{l-2}/(I_{l-2}^t:u_ns_{l-1}) \xrightarrow{\,\cdot s_{l-1}\,}\ S_{l-2}/(I_{l-2}^t:u_n)
		\longrightarrow\ S_{l-2}/((I_{l-2}^t:u_n),s_{l-1}) \longrightarrow\ 0.$$
	    By Lemma \ref{le2},  $\depth(S_i/(I_{i}^t:u_ns_{i+1}))=\depth(S_i/I_{i}^{t-1}).$
Here $n$ and $t-1$ have the same parity so, by using induction on $t$,
		$$\depth(S_i/(I_{i}^t:u_ns_{i+1}))\geq
			\Big(\frac{n-(t-1)}{2}\Big)k =\Big(\frac{n-t+1}{2}\Big)k>\Big(\frac{n-t-1}{2}\Big)k+l-1.$$
Since $S_{l-2}/((I_{l-2}^t:u_n),s_{l-1})\cong S_{l-1}/(I_{l-1}^t:u_n)$, consider the following short exact sequence		
$$0\longrightarrow\
		S_{l-1}/(I_{l-1}^t:u_nu_{n-1}) \xrightarrow{\,\cdot u_{n-1}\,}\ S_{l-1}/(I_{l-1}^t:u_n)
		\longrightarrow\ S_{l-1}/((I_{l-1}^t:u_n),u_{n-1}) \longrightarrow\ 0.$$
By Lemma \ref{le2}, $\depth(S_{l-1}/(I_{l-1}^t:u_nu_{n-1}))=	\depth(S_{l-1}/I_{l-1}^{t-1})$, here $l=1$ and $n$ and $t-1$ have the same parity,  thus by induction on $t$,
			$$\depth(S_{l-1}/(I_{l-1}^t:u_nu_{n-1}))\geq
			\Big(\frac{n-(t-1)}{2}\Big)k-1\geq \Big(\frac{n-t-1}{2}\Big)k+l-1.$$
		Clearly $S_{l-1}/((I_{l-1}^t:u_n),u_{n-1})\cong S'/I_{l-1}^tS'$, where $S'=K[A\backslash(A_n\cup \{u_{n-1}\})]$. Thus $u_n$ and all variables in $A_{n-1}$ are regular on $S'/I_{l-1}^tS'$. Since $$S'/I_{l-1}^tS'\cong \Big(K[A\backslash (\bar{A}_{n-1}\cup \bar{A_n})]/I^t (P_{n-2,k})\Big)[A_{n-1}\cup\{u_n\}],$$ therefore
		$\depth(S'/I_{l-1}^tS')= \depth\Big(K[A\backslash (\bar{A}_{n-1}\cup \bar{A_n})]/I^t (P_{n-2,k})\Big)+(k-1)+1.$ Since $n-2$ and $t$ have the opposite parity, so by induction on $n$ and Lemma \ref{le15}, we get
	$$\depth(S'/I_{l-1}^tS') \geq \Big(\frac{n-2-t-1}{2}\Big)k+(k-1)+k\geq \Big(\frac{n-t-1}{2}\Big)k+l-1.$$
		Depth Lemma implies,
		$$\depth(S/I^t) \geq  \Big(\frac{n-t-1}{2}\Big)k+l-1.$$
This completes the proof for depth. Note that from Lemma \ref{le7} and Theorem \ref{le12} we have that $\sdepth (S/I^t)\geq 1$, for all $t\geq 1$. Proof for the Stanley depth is similar by using Lemma \ref{le9} instead of Depth Lemma.
\end{proof}
\end{Theorem}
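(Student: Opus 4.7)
\bigskip

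\noindent\textbf{Proof proposal.} The plan is a double induction on $n$ and $t$, driven by the short exact sequence
$$0\to S/(I^t:u_n)\xrightarrow{\,\cdot u_n\,} S/I^t\to S/(I^t,u_n)\to 0,$$
where $u_n$ is the endpoint of the internal path. Since $P_{n,k,l}$ is bipartite, Lemma \ref{le7} together with Theorem \ref{le12} gives $\depth(S/I^t),\sdepth(S/I^t)\geq 1$ unconditionally, so we may focus on the second term in the $\max$. The base cases are $t=1$ (covered by Proposition \ref{th13}), $n=2$ with arbitrary $t$ (which falls to the bipartite bound, since the target $(n-t)/2\cdot k$ or similar is then $\leq 1$), and $n=3$ with $t=2$; the last one must be handled by hand via the same kind of short exact sequence chain used in the inductive step, because the inductive hypothesis on $n$ does not yet apply.

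\medskip

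For the inductive step ($n\geq 4$, $t\geq 2$) I would analyze the two pieces of the sequence separately. For $S/(I^t,u_n)$, Lemma \ref{le6} gives $S/(I^t,u_n)\cong\bigl(K[A\setminus\bar A_n]/I^t(P_{n-1,k})\bigr)[A_n]$, so Lemma \ref{le15} adds $|A_n|=l-1$ to the bound supplied by the inductive hypothesis on $P_{n-1,k}=P_{n-1,k,k}$ (note that the parity of $n-1$ is opposite to that of $n$, which is exactly what switches us into the right branch of the claimed formula). For $S/(I^t:u_n)$, I would iteratively quotient out the leaves $s_1,\dots,s_{l-1}\in A_n$ attached to $u_n$ by means of the chain of short exact sequences
\begin{equation*}
0\to S_i/(I_i^t:u_n s_{i+1})\xrightarrow{\,\cdot s_{i+1}\,} S_i/(I_i^t:u_n)\to S_{i+1}/(I_{i+1}^t:u_n)\to 0,
\end{equation*}
followed by one further step quotienting by $u_{n-1}$. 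Lemma \ref{le2} reduces each left-hand term to $S_i/I_i^{t-1}$, to which the inductive hypothesis on $t$ applies (with the parity of $t-1$ flipped). Lemma \ref{morey} identifies the right-hand term after the $u_{n-1}$ step with a module isomorphic to $\bigl(K[A\setminus(\bar A_{n-1}\cup\bar A_n)]/I^t(P_{n-2,k})\bigr)[A_{n-1}\cup\{u_n\}]$, where Lemma \ref{le15} contributes $k$ and the inductive hypothesis on $P_{n-2,k}$ supplies the rest. Collecting via the Depth Lemma yields the claimed inequality.

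\medskip

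Case management is the main bookkeeping obstacle: there are four subcases, organized by the parity of $n-t$ and by whether $l=1$ or $l\geq 2$, because the $l=1$ case has $A_n=\emptyset$ (so the chain over the $s_i$ is empty and one loses a $+1$), and the parity determines which branch of the recursive formula is being added to which. The key numerical check in each subcase is that the three lower bounds coming from $S/(I^t,u_n)$, from $S/(I^t:u_ns_{i+1})$, and from the final term isomorphic to a tensor-type module all meet or exceed the target on the right-hand side of the theorem; a careful inequality chase shows that equality of parities between $n-2$ and $n$, and between $t-1$ and $t$ (flipped), makes the recursion close up cleanly. The hardest step is the $n=3,\,t=2,\,l\geq 2$ base case, where the iteration over $s_1,\dots,s_{l-1}$ must be executed explicitly and combined with a colon by $u_2$ and an application of Proposition \ref{th13} to the resulting star ideal; here one must verify that the bound $l$ emerges rather than just $1$.

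\medskip

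Finally, the Stanley depth version is obtained by running the identical argument with Lemma \ref{le9} replacing the Depth Lemma; this works because only the inequality $\sdepth A_2\geq\min\{\sdepth A_1,\sdepth A_3\}$ is needed, and because Lemma \ref{le15}, Lemma \ref{sdisjoint}, Proposition \ref{2}, and Proposition \ref{th13} all supply the required Stanley depth analogs.
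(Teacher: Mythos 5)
Your proposal follows essentially the same route as the paper: the same colon/quotient short exact sequence at $u_n$, the same chain of sequences peeling off the leaves $s_1,\dots,s_{l-1}$ and then $u_{n-1}$, the same use of Lemmas \ref{le2}, \ref{le6}, \ref{morey}, \ref{le15}, the same base cases (including the hand-worked $n=3$, $t=2$, $l\geq 2$ case), the same four-way case split by parity and by $l=1$ versus $l\geq 2$, and the same substitution of Lemma \ref{le9} for the Depth Lemma to get the Stanley depth statement. The outline is correct; what remains is only the arithmetic verification in each subcase, which the paper carries out and which closes up as you predict.
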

\begin{Corollary}\label{cor3}
	If $n\geq 2$, $t\geq 1$ and $I=I(P_{n,k})$ then
	$$\depth(S/I^t),\sdepth(S/I^t) \geq \left\{\begin{array}{ll}	\max\{1,\Big(\frac{n-t+1}{2}\Big)k-1\}\,, & \text{if\,\, $n$ and $t$ have opposite parity;} \\
	\max\{1,\Big(\frac{n-t}{2}\Big)k\} \,, & \text{if\,\, $n$ and $t$ have same parity.}
	
	\end{array}\right.$$
\end{Corollary}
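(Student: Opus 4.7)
The plan is to obtain this corollary as an immediate specialization of Theorem~\ref{th2} at $l=k$. Recall that $P_{n,k}=P_{n,k,k}$ by Definition~\ref{def1}, so the hypothesis $I=I(P_{n,k})$ corresponds exactly to the case $l=k$ of Theorem~\ref{th2}. Since $k\geq 2$, we automatically have $l=k\geq 2$, and therefore the third branch of Theorem~\ref{th2} (the one requiring $l=1$) is never relevant here; only the first two branches will be invoked.

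First I would handle the case where $n$ and $t$ have opposite parity. Substituting $l=k$ into the first bound of Theorem~\ref{th2} yields
\[
\Big(\frac{n-t-1}{2}\Big)k+l-1=\Big(\frac{n-t-1}{2}\Big)k+(k-1)=\Big(\frac{n-t+1}{2}\Big)k-1,
\]
which is precisely the bound stated in the first branch of the corollary. In the remaining case, when $n$ and $t$ have the same parity, the condition $l=k\geq 2$ forces the use of the second branch of Theorem~\ref{th2}, giving the bound $\big(\frac{n-t}{2}\big)k$; this matches the second branch of the corollary verbatim.

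Since Theorem~\ref{th2} delivers the depth and Stanley depth inequalities simultaneously, both bounds of the corollary follow at once. No separate induction or short exact sequence argument is required; the only genuine content is the elementary identity $\big(\frac{n-t-1}{2}\big)k+k-1=\big(\frac{n-t+1}{2}\big)k-1$. Consequently there is no substantial obstacle to overcome: the corollary is just a restatement of Theorem~\ref{th2} under the specialization $l=k$, with the two surviving branches rewritten in a form that no longer mentions~$l$.
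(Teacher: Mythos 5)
Your proposal is correct and is exactly how the paper obtains this corollary: it is the specialization $l=k$ of Theorem~\ref{th2} (using $P_{n,k}=P_{n,k,k}$ and $k\geq 2$ to discard the $l=1$ branch), and your identity $\big(\tfrac{n-t-1}{2}\big)k+k-1=\big(\tfrac{n-t+1}{2}\big)k-1$ is the only computation needed.
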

\noindent A comparison of the actual values of depth with lower bound in Corollary \ref{cor3} is shown in the following example.
\begin{Example} {\em
By using CoCoA we have, $\depth(S/I^2(P_{4,4}))=5$ and $\depth(S/I^2(P_{5,3}))=6$, while by our Corollary \ref{cor3}, $\depth(S/I^2(P_{4,4}))\geq 4$ and $\depth(S/I^2(P_{5,3}))\geq 5$.
}

\end{Example}
\noindent Also this new bound is much sharper than the one given in Corollary \ref{dsc}, as shown in the following example.
\begin{Example}{\em
Let $I=I(P_{n,k})$ with $n=50$ and $k=10$. Clearly $P_{n,k}$ has two near leaves and its diameter is $51$. Let $t=15$. By Corollary \ref{dsc}
	$$\depth(S/I^{15})\,,\,\sdepth(S/I^{15}) \geq  \big \lceil \frac{51-15+2}{3} \big \rceil  = 13.$$ Whereas our Corollary \ref{cor3} shows that 	 $$\depth(S/I^{15})\,,\,\sdepth(S/I^{15}) \geq \Big (\frac{50-15+1}{2}\Big )10-1 = 179.$$
Comparison shows a noteworthy difference between both the lower bounds.}
\end{Example}

\section{Powers of edge ideal of a Subclass of Lobster Tree}
Let $r\geq 2$ and $p,t\geq 1$ be some integers. In this section we give an upper bound for depth and Stanley depth of $S/I^t(\mathcal{S}_{r,p})$. Our bounds depends only on $r$ and $t$. We significantly improve the bound for the depth and Stanley depth of $S/I^t(\mathcal{S}_{r,p})$ given in Corollary \ref{dsc}. It is easy to see that the diameter of $\mathcal{S}_{r,p}$ is fixed for any $r$ and $p$. The bound given in Corollary \ref{dsc} depends on $t$ and diameter of $\mathcal{S}_{r,p}$ so this bound becomes weak for bigger values of $t$. Where as our bound given in Corollary \ref{cor4} being independent of the diameter of $\mathcal{S}_{r,p}$ is better. Before proving the results of this section we introduce some notations. Let $p\geq 1$ and $q\geq 0$ be integers such that $q\leq p$. Let $1\leq i\leq r-1$, $B_i:=\{x_{1i},x_{2i},\dots,x_{pi}\}$, $B_r:=\{x_{1r},x_{2r},\dots,x_{qr}\}$ ($B_r=\emptyset$, if $q=0$), $\bar{B}_i:=B_i\cup \{v_i\}$ and $\bar{B}_{r}:=B_r\cup \{v_r\}$.  Let $B:=\{v_{r+1}\}\cup\bar{B}_1\cup \bar{B}_2\cup\dots\cup \bar{B}_r$ and define $S:=K[B].$

\begin{figure}[h!]
		\centering		\includegraphics[width=6cm]{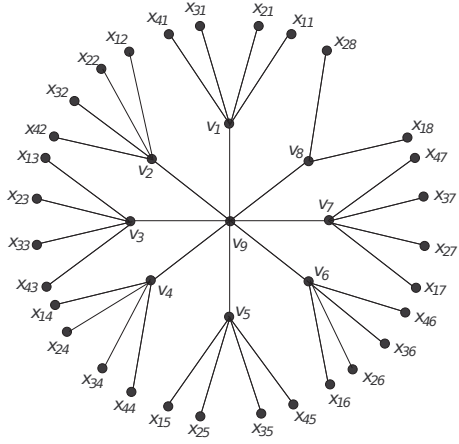}
		\caption{Graph $\mathcal{S}_{8,4,2}$ with labelled vertices.}
	\end{figure}

If $q\geq 1$, then
$$G(I(\mathcal{S}_{r,p,q}))=\{v_{r+1}v_r,v_rx_{1r},v_rx_{2r},\dots,v_rx_{qr}\}\cup\bigcup_{i=1}^{r-1}\{v_{r+1}v_i,v_ix_{1i},v_ix_{2i},\dots,v_ix_{pi}\}.$$
If $q=0$, then $$G(I(\mathcal{S}_{r,p,q}))=\{v_{r+1}v_r\}\cup\bigcup_{i=1}^{r-1}\{v_{r+1}v_i,v_ix_{1i},v_ix_{2i},\dots,v_ix_{pi}\}.$$
Note that for $1\leq j\leq r-1$ we have that $$I(\mathcal{S}_{j,k}):=I(\mathcal{S}_{r,p,q})\cap K[\{v_{r+1}\}\cup \bar{B}_1\cup  \dots\cup\bar{B}_{j-1} \cup \bar{B}_{j}].$$
Before proving the main result of this section we prove the result when $t=1$ in the following lemma.
\begin{Lemma} \label{lelt}
	Let $r\geq 2$ and $I=I(\mathcal{S}_{r,p,q})$. We have that $$\depth(S/I)\,,\, \sdepth(S/I) \geq \left\{
                                                                                          \begin{array}{ll}
                                                                                            r-1, & \hbox{if $q=0$;} \\
                                                                                            r, & \hbox{otherwise.}
                                                                                          \end{array}
                                                                                        \right.
.$$
\end{Lemma}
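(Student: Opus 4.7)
The plan is to exploit the short exact sequence
\begin{equation*}
0 \longrightarrow S/(I:v_{r+1}) \xrightarrow{\,\cdot v_{r+1}\,} S/I \longrightarrow S/(I,v_{r+1}) \longrightarrow 0
\end{equation*}
coming from the central vertex $v_{r+1}$, and to bound the depth and Stanley depth of the two outer modules using the structural lemmas already collected in Section~2. Both pieces will turn out to be transparent: the colon quotient is a polynomial ring, while the residue is a disjoint union of stars (with a possibly degenerate last block when $q=0$).

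First I would compute $(I:v_{r+1})$. Every generator of $I$ is either of the form $v_{r+1}v_i$, which contributes $v_i$ to the colon, or of the form $v_ix_{ji}$, which is already divisible by $v_i$; hence $(I:v_{r+1})=(v_1,v_2,\dots,v_r)$. Therefore $S/(I:v_{r+1})$ is a polynomial ring in the $1+(r-1)p+q$ variables of $\{v_{r+1}\}\cup B_1\cup\cdots\cup B_r$, and both its depth and its Stanley depth are equal to $1+(r-1)p+q$, which is at least $r$ since $p\geq 1$ and $r\geq 2$.

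Next I would analyse $S/(I,v_{r+1})$. Setting $v_{r+1}=0$ kills every edge meeting the centre and leaves exactly the pairwise disjoint star ideals $I(\mathcal{S}_p)$ on each $\bar{B}_i$ with $1\leq i\leq r-1$, together with $I(\mathcal{S}_q)$ on $\bar{B}_r$ when $q\geq 1$, or no relations at all on the singleton $\{v_r\}$ when $q=0$. Repeated application of Lemma \ref{ddisjoint} and Lemma \ref{sdisjoint} splits both invariants additively across these $r$ disjoint blocks; Proposition \ref{2} supplies depth and Stanley depth equal to $1$ for each nontrivial star block, while the degenerate block $\{v_r\}$ that appears when $q=0$ is handled by Lemma \ref{le15} and also contributes $1$. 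Summing the contributions yields $\depth(S/(I,v_{r+1})),\sdepth(S/(I,v_{r+1}))\geq r$ in both cases.

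Applying the Depth Lemma (for depth) and Lemma \ref{le9} (for Stanley depth) to the short exact sequence above then gives $\depth(S/I),\sdepth(S/I)\geq \min\{r,r\}=r$, which is sharper than the claimed $r-1$ when $q=0$ and matches the claimed $r$ when $q\geq 1$. The only real bookkeeping subtlety is the uniform handling of the degenerate block $\bar{B}_r=\{v_r\}$ in the case $q=0$, where one block carries a free variable in place of a star; this is expected to be the mildest obstacle and is resolved cleanly by Lemma \ref{le15}.
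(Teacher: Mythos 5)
Your argument is correct and follows essentially the same route as the paper: the short exact sequence at the central vertex $v_{r+1}$, the identification $S/(I:v_{r+1})\cong K[\{v_{r+1}\}\cup B_1\cup\dots\cup B_r]$ (depth and Stanley depth $(r-1)p+q+1\geq r$), and the splitting of $S/(I,v_{r+1})$ into $r$ disjoint star blocks via Lemma \ref{ddisjoint}, Lemma \ref{sdisjoint} and Proposition \ref{2}. The one point where you diverge is the degenerate block in the case $q=0$, and there your accounting is the sharper one and appears to be the correct one: since the only generator of $I$ involving $v_r$ is $v_{r+1}v_r$, which is absorbed into $(v_{r+1})$, the variable $v_r$ is free on $S/(I,v_{r+1})$ and contributes $+1$ by Lemma \ref{le15}, so $\depth(S/(I,v_{r+1})),\sdepth(S/(I,v_{r+1}))\geq (r-1)+1=r$. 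The paper instead assigns this block the value $\depth(K[v]/(v))=0$, but the edge ideal of the trivial graph is the zero ideal, so the relevant quotient is $K[v_r]$, of depth $1$; the example $\mathcal{S}_{2,1,0}\cong P_4$, where $\depth(S/I)=2=r$ rather than $r-1=1$, confirms your version. Consequently you obtain $\depth(S/I),\sdepth(S/I)\geq r$ in both cases, which implies the stated lemma a fortiori; it also indicates that the $q=0$ branch of the lemma is not tight as written.
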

\begin{proof}
		Consider the short exact sequence
	$$0\longrightarrow\ S/(I:v_{r+1}) \xrightarrow{\,\cdot v_{r+1}\,}\ S/I
	\longrightarrow\ S/(I,v_{r+1}) \longrightarrow\ 0,$$
	we have $S/(I:v_{r+1})\cong K[B_1\cup B_2\cup \dots\cup B_r\cup \{v_{r+1}\}]$, therefore 	
		$$\depth(S/(I:v_{r+1}))= (r-1)p +q+1\geq r.$$
	By definition of $I$, $(I,v_{r+1})=(I(H),v_{r+1})$, where $H$ is a forest with $r$ connected components, say $H_1,H_2,\dots,H_{r-1},H_r$. It can easily be seen that among these $r$ connected components, $r-1$ components are $p$-star graphs while one component is a $q$-star. Without loss of generality we may assume that for $1\leq i\leq r-1$, $H_i\cong \mathcal{S}_p$ and $H_r\cong \mathcal{S}_q$. If $q=0$ then $H_r\cong \mathcal{S}_0$ is a trivial graph on one vertex, say $v$. From Lemma \ref{ddisjoint} and Proposition \ref{2} it follows that
\begin{multline*}
\depth(S/(I,v_{r+1}))=\depth(K[B\backslash \{v_{r+1}\}]/(I(H)))=\depth(K[V(H_1)]/I(H_1))+ \dots +\\\depth(K[V(H_{r-1})]/I(H_{r-1}))+\depth(K[v]/(v))=\underbrace{1+1+\ldots+1}_{(r-1)- \text{ times}}+0=r-1.
\end{multline*}
If $q\neq0$ then $H_r\cong \mathcal{S}_q$. From Lemma \ref{ddisjoint} and Proposition \ref{2} it follows that
\begin{multline*}
\depth(S/(I,v_{r+1}))=\depth(K[B\backslash \{v_{r+1}\}]/(I(H)))=\depth(K[V(H_1)]/I(H_1))+ \dots +\\\depth(K[V(H_{r-1})]/I(H_{r-1}))+\depth(K[V(H_r)]/I(H_r))=\underbrace{1+1+\ldots+1}_{r- \text{ times}}=r.
\end{multline*}
Hence by applying Depth Lemma the required result follows. The result for Stanley depth can be proved in the same lines by using Lemma \ref{le9} instead of Depth Lemma and Lemma \ref{sdisjoint} instead of Lemma \ref{ddisjoint}.
	\end{proof}
\begin{Corollary}\label{sqr}{\em
Let $r\geq 2$ and $I=I(\mathcal{S}_{r,p})$. We have that $$\depth(S/I)\,,\, \sdepth(S/I) \geq r.$$}
\end{Corollary}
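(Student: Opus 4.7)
The statement is an immediate specialization of the preceding lemma, so the plan reduces to pointing out the right substitution rather than doing any fresh computation. Recall from the definition of $\mathcal{S}_{r,p,q}$ that $\mathcal{S}_{r,p,p}=\mathcal{S}_{r,p}$, and that in the overall setup we have $p\geq 1$. Therefore setting $q=p$ puts us in the regime $q\geq 1$ of Lemma \ref{lelt}.

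My plan is simply to invoke Lemma \ref{lelt} with the parameter choice $q=p$. The lemma gives two cases: $q=0$ yields the lower bound $r-1$, while $q\geq 1$ yields $r$. Since $p\geq 1$ forces $q=p\geq 1$, the second case applies and we immediately get
\[
\depth(S/I),\ \sdepth(S/I)\ \geq\ r,
\]
which is the claim.

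There is no real obstacle here: the work was already carried out in the proof of Lemma \ref{lelt}, where the short exact sequence with the central vertex $v_{r+1}$ was split, the colon ideal gave a polynomial ring of depth $(r-1)p+q+1\geq r$, and the quotient $(I,v_{r+1})$ decomposed into a disjoint union of $r-1$ copies of $\mathcal{S}_p$ together with one copy of $\mathcal{S}_q$, each contributing depth and Stanley depth $1$ (via Proposition \ref{2}), combined by Lemma \ref{ddisjoint} and Lemma \ref{sdisjoint}. Specializing to $q=p$ collapses both cases into the single bound $r$. Hence the corollary follows by one sentence, and no further argument is required beyond citing Lemma \ref{lelt}.
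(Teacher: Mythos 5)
Your proposal is correct and matches the paper's intended argument exactly: the corollary is the specialization of Lemma \ref{lelt} to $q=p$, and since $p\geq 1$ the case $q\geq 1$ of that lemma gives the bound $r$ directly. No further comment is needed.
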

\begin{Example}
{\em We use CoCoA and show that the equality may hold in Corollary \ref{sqr}. For instance, we have $\depth(S/I(\mathcal{S}_{4,2}))=\sdepth(S/I(\mathcal{S}_{4,2}))=4$ and $\depth(S/I(\mathcal{S}_{5,2}))$= $\sdepth(S/I(\mathcal{S}_{5,2}))=5$.}
\end{Example}

\begin{Lemma}\label{new}
If $I=I(\mathcal{S}_{r,p,q})$ then for $t\geq 1$, $(I^t,v_r)=(I^t(\mathcal{S}_{r-1,p}),v_r)$.
\end{Lemma}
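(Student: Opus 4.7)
The plan is to mimic the short argument used for Lemma \ref{le6}, since the combinatorial situation is exactly analogous: removing the vertex $v_r$ from $\mathcal{S}_{r,p,q}$ leaves the subgraph $\mathcal{S}_{r-1,p}$ supported on $\{v_{r+1}\}\cup \bar B_1\cup\cdots\cup \bar B_{r-1}$. So I would show both inclusions by a direct inspection of the generator list $G(I(\mathcal{S}_{r,p,q}))$ written out explicitly just before the lemma.

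For the inclusion $(I^t(\mathcal{S}_{r-1,p}),v_r)\subseteq (I^t,v_r)$, I would simply observe that every edge generator of $I(\mathcal{S}_{r-1,p})$, namely the monomials $v_{r+1}v_i$ for $1\leq i\leq r-1$ and $v_i x_{ji}$ for $1\leq i\leq r-1$, $1\leq j\leq p$, already appears in $G(I(\mathcal{S}_{r,p,q}))$. Therefore $I(\mathcal{S}_{r-1,p})\subseteq I$, and raising to the $t$th power gives $I^t(\mathcal{S}_{r-1,p})\subseteq I^t$, which settles this direction.

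For the reverse inclusion $(I^t,v_r)\subseteq (I^t(\mathcal{S}_{r-1,p}),v_r)$, it is enough to consider a monomial $u\in I^t$ that is not divisible by $v_r$; the goal is to show $u\in I^t(\mathcal{S}_{r-1,p})$. Writing $u$ as a product of $t$ elements of $G(I)$ times an additional monomial, I would note that none of those $t$ chosen generators can involve $v_r$, since otherwise $v_r$ would divide $u$. Inspecting $G(I)$, the generators that contain $v_r$ are precisely $v_{r+1}v_r$ and $v_r x_{jr}$ for $1\leq j\leq q$; every other generator lies in $G(I(\mathcal{S}_{r-1,p}))$. Hence each of the $t$ factors belongs to $I(\mathcal{S}_{r-1,p})$, and therefore $u\in I^t(\mathcal{S}_{r-1,p})$.

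There is essentially no obstacle; the only thing to be slightly careful about is that the ideal $I(\mathcal{S}_{r-1,p})$ is viewed inside $S$ via extension, so that the subring $K[\{v_{r+1}\}\cup\bar B_1\cup\cdots\cup\bar B_{r-1}]$ is the correct home for the generators of $I(\mathcal{S}_{r-1,p})$ and the quotient by $v_r$ identifies them with generators of $I$. Once this is noted, the argument is a direct transcription of the proof of Lemma \ref{le6}.
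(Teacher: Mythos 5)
Your proof is correct and follows exactly the paper's argument (which itself just mirrors Lemma \ref{le6}): the forward inclusion from the containment of generating sets, and the reverse by noting that a monomial in $I^t$ not divisible by $v_r$ must factor through $t$ generators none of which involves $v_r$. You simply spell out the details that the paper leaves as ``clear.''
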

\begin{proof}
The inclusion $(I^t(\mathcal{S}_{r-1,p}),v_r)\subseteq(I^t,v_r)$ is clear. Conversely, if $w\in I^t$ is a monomial which is not divisible by $v_r$, then, by definition of $G(I)$, it follows that $w\in I^t(\mathcal{S}_{r-1,p})$.
\end{proof}
\noindent Now moving towards the main result of this section.
\begin{Theorem} \label{thss}
Let $r\geq 2$, $t\geq 1$, $p\geq 1$ and $0 \leq q \leq p$. If $I=I(\mathcal{S}_{r,p,q})$ then
	$$
	\depth(S/I^t)\, , \, 	\sdepth(S/I^t) \geq  \left\{
	\begin{array}{ll}
\max\{1,r-t\}, & \qquad \text{if $q=0$;} \\
	\max\{1,r-t+1\}, & \qquad \text{otherwise.}
	\end{array}
	\right.
	$$
\end{Theorem}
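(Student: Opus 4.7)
Plan: Since $\mathcal{S}_{r,p,q}$ is a tree, hence bipartite, Lemma \ref{le7} together with Theorem \ref{le12} immediately gives $\depth(S/I^t),\sdepth(S/I^t)\geq 1$ for all $t\geq 1$, so the $\max\{1,\cdot\}$ is automatic whenever the linear part falls below $1$. What remains is the linear bound, and I would prove this by induction on $t$, the base case $t=1$ being exactly Lemma \ref{lelt}; depth and Stanley depth are treated in parallel, using Lemma \ref{le9} in place of the Depth Lemma throughout.

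For the inductive step I would use the short exact sequence
$$
0 \longrightarrow S/(I^t:v_r) \xrightarrow{\,\cdot v_r\,} S/I^t \longrightarrow S/(I^t,v_r) \longrightarrow 0.
$$
The right-hand term is the easy side: Lemma \ref{new} rewrites $(I^t,v_r)$ as $(I^t(\mathcal{S}_{r-1,p}),v_r)$, the $q$ leaves $x_{1r},\dots,x_{qr}$ then appear as free variables, and Lemma \ref{le15} combined with the inductive bound for $\mathcal{S}_{r-1,p}=\mathcal{S}_{r-1,p,p}$ (the ``otherwise'' case) yields $\depth(S/(I^t,v_r))\geq \max\{1,r-t\}+q$, which dominates both target values and is tight when $q=0$.

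The heart of the argument is bounding $\depth(S/(I^t:v_r))$. When $q\geq 1$, I would run a chain of $q$ short exact sequences that successively kill the leaves $x_{1r},\dots,x_{qr}$ of $v_r$: at stage $i$, Lemma \ref{le2} gives $(I_i^t:v_rx_{(i+1)r})=I_i^{t-1}$, whose depth is at least $\max\{1,r-t+2\}$ by the inductive hypothesis on $t$ applied to $\mathcal{S}_{r,p,q-i}$, while Lemma \ref{morey} identifies the cokernel with $(I_{i+1}^t:v_r)$ in the smaller ring $S_{i+1}$. After $q$ such steps I reach $(I_q^t:v_r)$ in which $v_r$ has become a leaf of $\mathcal{S}_{r,p,0}$, and one further short exact sequence with its unique neighbor $v_{r+1}$ splits this into $I_q^{t-1}$ on one side (depth $\geq \max\{1,r-t+1\}$ by the inductive hypothesis in the $q=0$ case) and, after invoking Lemma \ref{morey} and using Lemma \ref{le15} to absorb the now-free variable $v_r$, the $t$-th power of the edge ideal of a disjoint union of $r-1$ copies of $\mathcal{S}_p$ on the other (depth $\geq r-1$ by Corollary \ref{rth111} applied to this forest, hence $\geq r$ after restoring $v_r$). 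Combining via the Depth Lemma then gives $\depth(S/(I^t:v_r))\geq \max\{1,r-t+1\}$, and the case $q=0$ is handled by the same argument with an empty initial chain. Feeding both estimates into the Depth Lemma on the original sequence produces the claimed bound in each case, and the Stanley depth statement follows by an identical argument with Lemma \ref{le9}. The main obstacle is the bookkeeping of the ideals $I_i$ and rings $S_i$ through the chain so that Lemmas \ref{le2} and \ref{morey} apply cleanly and the two flavors of inductive hypothesis (``$q\geq 1$'' and ``$q=0$'') are invoked in the right places; the small-$r$ base case (in particular $r=2$, where $\mathcal{S}_{r-1,p}$ degenerates to an ordinary star) would require a separate check, most conveniently by recognizing $\mathcal{S}_{2,p,q}$ as a caterpillar and invoking Theorem \ref{th2} on it.
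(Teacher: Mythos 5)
Your proposal is essentially the paper's own proof: the same colon/sum sequence at $v_r$, the same use of Lemma \ref{new} plus Lemma \ref{le15} for $(I^t,v_r)$, the same chain of $q$ short exact sequences peeling off the leaves of $v_r$ via Lemmas \ref{le2} and \ref{morey}, the same final splitting at $v_{r+1}$ into $I_q^{t-1}$ (handled by the $q=0$ inductive case) and a disjoint union of $r-1$ $p$-stars, and the same double induction on $r$ and $t$. The one step that would fail as written is your suggested treatment of the $r=2$ base case: $\mathcal{S}_{2,p,q}$ is a caterpillar but is \emph{not} of the form $P_{n,k,l}$ (the spine vertex $v_{r+1}$ carries no pendant leaves, while $P_{n,k,l}$ attaches $k-1\geq 1$ leaves to every spine vertex except possibly the last), so Theorem \ref{th2} does not apply to it; this is harmless, though, since for $r=2$ and $t\geq 2$ the claimed bound is just $1$, which Lemma \ref{le7} and Theorem \ref{le12} already provide, and $t=1$ is Lemma \ref{lelt} --- exactly how the paper disposes of it.
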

\begin{proof}		
We use induction on $r$ and $t$. If $r=2$ and $ t \geq 1$, the result follows from Lemma \ref{le7}. If $t=1$ and $r \geq 2$, the result follows from Lemma \ref{lelt}. Assume $r \geq 3$ and $t \geq 2$.	 Consider the short exact sequence
	\begin{equation}
	0\longrightarrow\ S/(I^t:v_r) \xrightarrow{\,\cdot v_r\,}\ S/I^t
	\longrightarrow\ S/(I^t,v_r) \longrightarrow\ 0
	\end{equation}
 by Depth Lemma
	\begin{equation} \label{2t1}
	\depth(S/I^t)\geq \min\{\depth(S/(I^t:v_r)),
	\depth(S/(I^t,v_r))\}.
	\end{equation}
	\textbf{Case 1:} Let $q=0$. From Lemma \ref{new} it follows that
	$(I^t,v_r) = (I^t(\mathcal{S}_{r-1,p}),v_r)$. Since $\mathcal{S}_{r-1,p}=\mathcal{S}_{r-1,p,p}$ and $p\geq 1$, using induction on $r$, it follows that
	\begin{eqnarray*}
		\depth(S/(I^t,v_r)) &=& \depth (K[{B}\backslash \{v_r\}]/I^t(\mathcal{S}_{r-1,p})) \geq (r-1)-t+1=r-t.
	\end{eqnarray*}	
	We consider the short exact sequence
	\begin{equation} \label{2t2}
	0\longrightarrow\
	S/(I^t:v_rv_{r+1}) \xrightarrow{\,\cdot v_{r+1}\,}\ S/(I^t:v_r )\longrightarrow\
	S/((I^t:v_r),v_{r+1}) \longrightarrow\ 0,
	\end{equation}
 since by Lemma \ref{le2}, $(I^t:v_rv_{r+1}) = I^{t-1}$ so by induction on $t$
	\begin{eqnarray*}
		\depth(S/(I^t:v_rv_{r+1})) &=& \depth(S/I^{t-1})   \geq r-(t-1) =r-t+1.
	\end{eqnarray*}
 Let $R' =K[B\backslash \{v_{r+1}\}]$ and $I'=IR'$. By Lemma \ref{morey}, $S/((I^t:v_r),v_{r+1}) \cong R'/ (I^t:v_r)\cong R'/(I')^t$. Clearly $v_r$ is a regular variable on $R'/ (I')^t$ and $I'$ corresponds to the edge ideal of a forest consisting of $r-1$ connected components and each component is a $p$-star. Therefore by Lemma \ref{le15} and  Theorem \ref{rth1}
	\begin{eqnarray*}
	\depth(	S/((I^t:v_r),v_{r+1}))=\depth(R'/(I')^t)&\geq& \max \big \{\big \lceil \frac{2-t+2}{3} \big \rceil +(r-1)-1 , r-1\big \} +1  \\ &=&  (r-1) +1= r > r-t.
	\end{eqnarray*}	
By applying Depth Lemma on sequence (\ref{2t2}) we get
$\depth(S/ (I^t:v_r)) \geq r-t.$
From  Eq. (\ref{2t1}) the result follows.\\\\
\noindent \textbf{Case 2:} Let $q \geq 1$. Let us label the vertices of  $B_r\neq \emptyset$ with $\{y_1,y_2,\dots,y_q\}$. By Lemma \ref{new}, $(I^t,v_r)=
	(I^t(\mathcal{S}_{r-1,p}),v_r)$, therefore $S/(I^t,v_r)\cong (K[B\backslash\bar{B}_r]/I^t(\mathcal{S}_{r-1,p}))[B_r]$. Thus by Lemma \ref{le15} and induction on $r$
	\begin{eqnarray*}
		\depth(S/(I^t,v_r)) &=& \depth (S/I^t (\mathcal{S}_{r-1,p}))+|B_r| \\
		&\geq& ((r-1)-t+1)+q
	= q+r-t,
	\end{eqnarray*}
	Let $R_i = S/(y_1 , \dots , y_i)$ and $I_i=IR_i$, where $R_0=S$ and $I_0=I$. We consider a family of short exact sequences:
	$$0\longrightarrow\
	R_0/(I_0^t:v_ry_1) \xrightarrow{\,\cdot y_1\,}\ R_0/(I^t:v_r )\longrightarrow\
	R_0/((I^t:v_r),y_1) \longrightarrow\ 0$$
	$$0\longrightarrow\
	R_1/(I_1^t:v_ry_2) \xrightarrow{\,\cdot y_2\,}\ R_1/(I_1^t:v_r )\longrightarrow\
	R_1/((I_1^t:v_r),y_2) \longrightarrow\ 0$$
	$$0\longrightarrow\
	R_2/(I_2^t:v_ry_3) \xrightarrow{\,\cdot y_3\,}\ R_2/(I_2^t:v_r )\longrightarrow\
	R_2/((I_2^t:v_r),y_3) \longrightarrow\ 0$$
	$$\vdots$$
	$$0\longrightarrow\
	R_{q-1}/(I_{q-1}^t:v_ry_{q}) \xrightarrow{\,\cdot y_q\,}\ R_{q-1}/(I_{q-1}^t:v_r )\longrightarrow\
	R_{q-1}/((I_{q-1}^t:v_r),y_{q}) \longrightarrow\ 0.$$
	For $0 \leq i \leq q-1$, by Lemma \ref{le2} we have $R_i/(I_i^t : v_ry_{i+1}) \cong R_i /I_i ^{t-1}$. Thus by induction on $t$ \begin{equation}
	\depth(R_i/(I_i^t : v_ry_{i+1})) = \depth(R_i /I_i ^{t-1}) \geq r-(t-1)+1 = r-t+2.
	\end{equation}	
By Lemma \ref{morey}, $R_{q-1}/((I_{q-1}^t:v_r),y_{q})\cong R_{q}/(I_{q}^t:v_r)$, now we have the short exact sequence
$$0\longrightarrow\ R_{q}/(I_{q}^t:v_rv_{r+1}) \xrightarrow{\,\cdot v_{r+1}\,}\ R_{q}/(I_{q}^t:v_r )\longrightarrow\
	R_{q}/((I_{q}^t:v_r),v_{r+1}) \longrightarrow\ 0,$$
by Lemma \ref{le2} we have $\depth(R_{q}/(I_{q}^t:v_rv_{r+1}))=\depth(R_{q}/I_{q}^{t-1}).$ Thus it is easy to see that $R_{q}/I_{q}\cong K[B\backslash B_r]/I(\mathcal{S}_{r,p,0})$. Thus by induction on $t$ and case (1), $\depth(R_{q}/I_{q}^{t-1})\geq r-(t-1)=r-t+1.$
Clearly $R_{q}/((I_{q}^t:v_r),v_{r+1})\cong R''/L^t$, where $R''=[B\backslash (B_r\cup\{v_{r+1}\})]$ and $L=IR''$ is the edge ideal of a forest consisting of $r-1$ connected components and each component is a $p$-star. Clearly $v_r$ is a regular variable on $R''/L^t$. Therefore by Lemma \ref{le15} and  Theorem \ref{rth1}
\begin{eqnarray*}
	\depth(R_{q}/((I_{q}^t:v_r),v_{r+1}))=\depth(R''/L^t)&\geq& \max \big \{\big \lceil \frac{2-t+2}{3} \big \rceil +(r-1)-1 , r-1 \big \} +1  \\ &=&  (r-1) +1= r \geq r-t+1.
\end{eqnarray*}		
\noindent Thus, by Depth Lemma $\depth(S/(I^t:v_r)) \geq r-t+1,$ and hence by Eq. (\ref{2t1}) $\depth(S/I^t) \geq  r-t+1. $
On the same lines by using Lemma \ref{le9} instead of Depth Lemma one can prove the result for Stanley depth.
\end{proof}
\begin{Corollary}\label{cor4}
Let $t\geq1$, $p\geq 1$ and $I=I(\mathcal{S}_{r,p})$. We have that
	$$\depth(S/I^t)\, , \, 	\sdepth(S/I^t)\geq \max\{1, r-t+1\}.$$
\end{Corollary}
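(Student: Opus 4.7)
The plan is to simply specialize Theorem \ref{thss} to the case $q = p$. By the definition given earlier in the paper, $\mathcal{S}_{r,p,p} = \mathcal{S}_{r,p}$ as graphs, and consequently their edge ideals coincide: $I(\mathcal{S}_{r,p}) = I(\mathcal{S}_{r,p,p})$. Thus the hypotheses of Theorem \ref{thss} are satisfied for every $t \geq 1$, with the parameter $q$ chosen to be $p$.

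Since we are assuming $p \geq 1$, we have $q = p \geq 1$, so we land in the second (''otherwise'') branch of the piecewise bound in Theorem \ref{thss}. That branch states
$$\depth(S/I^t)\, , \, \sdepth(S/I^t) \geq \max\{1,\, r - t + 1\},$$
which is precisely the conclusion of Corollary \ref{cor4}. No separate case analysis or additional inductive argument is needed, since Theorem \ref{thss} already handles both depth and Stanley depth uniformly.

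There is no real obstacle here: the corollary is a direct specialization and the proof is a one-line citation of Theorem \ref{thss} together with the observation $\mathcal{S}_{r,p} = \mathcal{S}_{r,p,p}$. The only thing to be careful about is noting that the case $q=0$ of Theorem \ref{thss} (which would give the weaker bound $\max\{1, r-t\}$) does not apply, because the hypothesis $p \geq 1$ forces $q = p \geq 1$.
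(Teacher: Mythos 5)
Your proposal is correct and is exactly the intended derivation: the paper states the corollary as an immediate consequence of Theorem \ref{thss}, using $\mathcal{S}_{r,p}=\mathcal{S}_{r,p,p}$ so that $q=p\geq 1$ places one in the second branch of the bound. Your remark that the $q=0$ branch cannot occur under the hypothesis $p\geq 1$ is the only point needing care, and you handle it correctly.
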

\noindent A comparison of the actual values of depth with lower bound in Corollary \ref{cor4} is shown in the following example.
\begin{Example}
{\em By using CoCoA we have, $\depth(S/I^2(\mathcal{S}_{4,2}))=4$ and $\depth(S/I^2(\mathcal{S}_{5,2}))=5$, while by our Corollary \ref{cor4}, $\depth(S/I^2(\mathcal{S}_{4,2}))\geq 3$ and $\depth(S/I^2(\mathcal{S}_{5,2}))\geq 4$. }
\end{Example}
\noindent Also this new bound is much sharper than the one given in Corollary \ref{dsc}, as shown in the following example. Note that $\mathcal{S}_{r,p}$ has
$r$ near leaves.
\begin{Example} {\em
Let $I=I(\mathcal{S}_{r,p})$ with $r =55$ and $t =10$. Clearly $d=4$, thus by Corollary \ref{dsc} we have $$\depth(S/I^{10})\,,\,\sdepth(S/I^{10}) \geq \big \lceil \frac{4-10+55}{3} \big \rceil  = 17.$$ While by our Corollary \ref{cor4},  $$\depth(S/I^{10})\,,\,\sdepth(S/I^{10}) \geq 55-10+1 = 46.$$
\\}
\end{Example}
	

\begin{thebibliography}{99}
	\bibitem {AA} Alipour, A., Tehranian, A. (2017). Depth and Stanley Depth of Edge Ideals of Star Graphs.{\emph{ International Journal of Applied Mathematics and Statistics}}, 56(4), 63-69.
	

\bibitem{Br} Brodmann, M. (1979). The asymptotic nature of the analytic spread. \emph{Mathematical Proceedings of the Cambridge Philosophical Society}, 86(1), 35-39.	
\bibitem{B} Burch, L. (1972). Codimension and analytic spread. \emph{Mathematical Proceedings of the Cambridge Philosophical Society}, 72(3), 369-373.

\bibitem{BH} Bruns, W., Herzog, H. J.(1998). Cohen-Macaulay rings. {Cambridge University Press}.	
\bibitem{FF} Cimpoeas, M. (2008). Some remarks on the Stanley's depth for multigraded modules. \emph{
Le Matematiche}, Vol. LXIII - Fasc. II, 165-171.
\bibitem{COC} CoCoATeam, CoCoA: A system for doing Computations in Commutative Algebra, available at http://cocoa.dima.unige.it.
	\bibitem{D} 
	Duval, A. M., Goeckner, B., Klivans, C. J.,  Martin, J. L. (2016). A non-partitionable Cohenâ-Macaulay simplicial complex.{\emph{ Advances in Mathematics}}, 299, 381-395.
	
	 \bibitem {LS} 
	Fouli, L.,  Morey, S. (2015). A lower bound for depths of powers of edge ideals.{\emph{ Journal of Algebraic Combinatorics}}, 42(3), 829-848.
	
		\bibitem{JA} 
	Gallian, J. A. (2009). A dynamic survey of graph labeling. {\emph{The Electronic Journal of Combinatorics}}, 16(6), 1-219.
	
	\bibitem {HH} Herzog, J. ,Hibi, T. (2005). The depth of powers of an ideal.  {\emph {Journal of Algebra}}, 291(2), 534-550.
	
	 \bibitem{HV} 
	Herzog, J., Vladoiu, M.,  Zheng, X. (2009). How to compute the Stanley depth of a monomial ideal. {\emph { Journal of Algebra}}, 322(9), 3151-3169.
	
	\bibitem{MI}
	Ishaq, M. (2011). Values and bounds for the Stanley depth.{\emph { Carpathian Journal of Mathematics}}, 27(2), 217-224.
	
	\bibitem{IQ}
	Ishaq, M., Qureshi, M. I. (2013). Upper and lower bounds for the Stanley depth of certain classes of monomial ideals and their residue class rings.{\emph {  Communications in Algebra}}, 41(3), 1107-1116.
	

	\bibitem {SM}
	Morey, S. (2010). Depths of powers of the edge ideal of a tree.{\emph{ Communications in Algebra}}, 38(11), 4042-4055.
	
	\bibitem{pop}	
	Popescu, A. (2010). Special stanley decompositions. \emph{ Bulletin math$\acute{e}$matique de la Soci$\acute{e}$t$\acute{e}$ des Sciences Math$\acute{e}$matiques de Roumanie}, 53(101), No. 4, 363-372.
	
	 \bibitem{FA} 
	Pournaki, M., Seyed Fakhari, S. A.,  Yassemi, S. (2013). Stanley depth of powers of the edge ideal of a forest.{\emph{ Proceedings of the American Mathematical Society}}, 141(10), 3327-3336.
\bibitem{GR} Rinaldo, G. (2008). An algorithm to compute the Stanley depth of monomial ideals, \emph{Le Matematiche}, Vol. LXIII - Fasc. II, 243-256.	 
	
\bibitem{4} 
	Rauf, A. (2010). Depth and Stanley depth of multigraded modules.{\emph{ Communications in Algebra}}, 38(2), 773-784.
	
	\bibitem{ST} 
	Stanley, R. P. (1982). Linear Diophantine equations and local cohomology.{\emph{ Inventiones mathematicae}}, 68(2), 175-193.
	
		\bibitem{V} 
	Villarreal, R. H. (2001). Monomial Algebras. Monographs and Textbooks in Pure and Applied Mathematics. New York: Marcel Dekker, Inc., Vol. 238.
	
	
\end{thebibliography}
\end{document}